\renewcommand{\emph}[1]{\textit{#1}}
\newcommand{\lp}{\ensuremath{\oplus}}
\newcommand{\lm}{\ensuremath{\otimes}}
\newcommand{\base}[2]{\ensuremath{\left. #2\right|_{#1}}}
\begin{document}
\title{Maximally additively reducible subsets of integers}
\author{Gal Gross}
\address{Department of Mathematics, University of Toronto,
Toronto, ON, Canada M5S 2E4}
\email{g.gross@mail.utoronto.ca}

\begin{abstract}
Let $A, B \subseteq \nats$ be two finite sets of natural numbers. We say that  $B$ is an \demph{additive divisor} for $A$ if there exists some $C \subseteq \nats$ with $A = B+C$. We prove that among those subsets of $\{0, 1, \ldots, k\}$ which have $0$ as an element, the full interval $\{0, 1, \ldots,k\}$ has the most divisors. To generalize to sets which do not have $0$ as an element, we prove a correspondence between additive divisors and \demph{lunar multiplication}, introduced by Appelgate, LeBrun and Sloane (2011) in their study of a kind of min/max arithmetic. The number of binary lunar divisors is related to compositions of integers which are restricted in that the first part is greater or equal to all other parts. We establish some bounds on such compositions to show that $\{1, \ldots, k\}$ has the most divisors among all subsets of $\{0, 1, \ldots, k\}$. These results resolve two conjectures of LeBrun et al.~regarding the maximal number of lunar binary divisors, a special case of a more general conjecture about lunar divisors in arbitrary bases. We resolve this third conjecture by generalizing from sum-sets to sum-multisets.
\end{abstract}

\maketitle

\section{Introduction}
\label{s:preface}
Let $(G, +)$ be a commutative group, and $A, B \subseteq G$ subsets. The \demph{sumset} $A+B$ (also called \demph{Minkowski sum}) is the set of pairwise sums
\[A+B \eqdef \st{a+b}{a \in A, b \in B}.\]
Classical additive number theory studies direct problems: given a certain set $A$, what can we say about its sumset $A+A$, or iterated sumsets $nA = A+\cdots + A$ (with $n$ summands)? (See \cite{Nathanson1} for an excellent introduction.) In contrast, inverse problems try to extract information about $A$ from information about its sumset. (See \cite{Nathanson2}; and \cite{TaoVu} for an overview of both direct and inverse problems). One such inverse problem is the question, which subsets are sumsets? The asymptotic version of this question was first raised by Ostmann \cite{Ostmann}: we say that a set of positive integers $C \subseteq \nats^+$ is (additively) \demph{reducible} if there are some $A, B \subseteq \nats^+$ each with more than one element, such that $A+B = C$; otherwise, $C$ is said to be \demph{irreducible}. Similarly, we call $C$ \demph{asymptotically reducible} if there are some $A, B \subseteq \nats^+$ each with more than one element, and and integer $m \in \nats$ such that $(A+B) \insect [m, \infty) = C \insect [m, \infty)$; otherwise, $C$ is said to be \demph{asymptotically irreducible}. It is easy to see that the set $P$ of primes is irreducible (because of $2,3 \in P$), and Ostmann conjectured that it is asymptotically irreducible. This cojecture, sometimes referred to as the inverse Goldbach problem, remains unresolved. It has since been placed in the wider context of the ``sum-product phenomenon'' as exemplified by Erd\H{o}s and Szemer\'edi's \cite{erdosszem}. (See also \cite{elsh} for a summary of recent progress.)

Regardless of multiplicative structure, Wirsing \cite{wirsing} has proved that almost all subsets of $\nats$ are asymptotically irreducible, and hence also irreducible. (To interpret ``almost all'' one identifies subsets of $\nats$ with their binary encoding, and thus with the interval $[0, 2]\subseteq \reals$; see \S\ref{s:lunar}.) This paper is concerned with the similar but opposite question: which subsets $C \subseteq [0, N] \subseteq \nats$ are maximally reducible?

\begin{defi}
	Let $C \subseteq \nats$ be a set of natural numbers. We say that $A$ is an \demph{additive divisor} (or \demph{sumset divisor} or simply \demph{divisor}) for $C$, if there exists some $B \subseteq N$ such that $C = A+B$.

	For any finite set of natural numbers $C \subseteq \nats$, we denote $d(C)$ the number of sumset divisors. That is, $d(C)$ is the number of distinct sets $A \subseteq \nats$ such that there exists some $B \subseteq \nats$ with $C = A+B$.
\end{defi}

The trivial decomposition $C = C + \{0\}$ shows that every set has at least two divisors. Additively irreducible sets have exactly two. Fix some $k \in \nats$ and consider all subsets of $[0, k]$. In \S\ref{s:0-divisors} we develop a correspondence (called ``$k$-promotion'') between divisors of subsets which have $0$ as an element. Theorem \ref{crlodd} of \S\ref{s:0-divisors} shows that among those subsets which have $0$ as an element, the full interval $\{0, 1, \ldots,k\}$ has the most divisors. 

In \S\ref{s:lunar} we assign each finite subset of $\nats$ a binary number, and prove that the sumset operation corresponds to lunar multiplication on binary numbers; a multiplication operator defined by Applegate, LeBrun, and Sloane \cite{lunar} in their study of alternative systems of arithmetic on digits, in which long addition and long multiplication can be performed without ``carries''. This new correspondence connects the number of sumset divisors to the number of lunar divisors. In their paper, LeBrun et al. establish a correspondence between the number of binary lunar divisors of $m$ and the number of restricted compositions of $\ell$, where $\ell$ is the number of $1$'s in the binary representation of $m$. The compositions are restricted in that the first part is greater or equal to all other parts. In \S\ref{s:counting} we prove that the table enumerating such restricted compositions can be easily constructed using properties of the forward difference from the finite calculus. We then establish some bounds on such restricted compositions. We use these bounds to prove Theorem \ref{crleven} of \S\ref{s:nonnormal}, which shows that $\{1, \ldots, k\}$ has the most divisors among all subsets of $[0, k]$ (that is, we remove the restriction that $0$ is an element).

Due to the above correspondence between sumsets and lunar numbers, Theorems \ref{crlodd} and \ref{crleven} resolve Conjectures 13 and 14 of LeBrun et al.~These conjectures are a binary version of a more general Conjecture 12, since lunar arithmetic is defined for arbitrary bases. This conjecture is resolved in \S\ref{s:bases} (see Theorem \ref{thm:bases}), in which we prove a correspondence between base $b$ lunar numbers and arrays of subsets of $\nats$ of height $(b-1)$. We conclude in \S\ref{s:conclusion} with some open questions.
\section{Divisors of 0-rooted sets}
\label{s:0-divisors}

Throughout the paper we denote by $[k]$ the full interval $[k] = [0, k]\insect \nats = \{0, \ldots, k\}$.

\begin{defi}
	We say $A \subseteq \nats$ is a \demph{$0$-rooted set} if $\min{A} = 0$. For $k \in \nats$, let $\mcal{Z}_k$ denote the collection of $0$-rooted sets whose maximal element is $k$:
	\[\mcal{Z}_{k} = \st{A \subseteq \nats}{\min(A) = 0\,,\, \max(A) = k}.\]
\end{defi}
For convenience we also introduce $\mcal{Z}_{\leq k} = \Union_{\ell \leq k}\mcal{Z}_\ell$, and $\mcal{Z} = \Union_{k \in \nats} \mcal{Z}_k$ the collection of finite $0$-rooted sets. 

The purpose of this section is to prove that among $0$-rooted sets $\mcal{Z}_{\leq k}$, the full interval $\{0, \ldots, k\}$ has the most number of divisors. Given any $A \in \mcal{Z}_{\leq k}$, we start by describing a procedure for turning factors of $A$ into factors of $[k]$ in a process we call \demph{$k$-promotion}. 

\begin{defi}
	Let $A \in \mcal{Z}_{\leq k}$ and suppose that $A = B+C$ with $\max(B) \leq \max(C)$. We define the set $C_B$ as follows:
	\[C_B = C \union (([k]\setminus A) \insect [\max(B)-1]) \union ((([k]\setminus A)-\{\max(B)\}) \insect \nats)\]
	That is, for each $s \in [k]\setminus A$: if $s<\max(B)$ we append $s$ to $C$; while if $s\geq \max(B)$ we append $s-\max(B)$ to $C$.
\end{defi}

	\begin{lemma}
		Let $A \in \mcal{Z}_{\leq k}$, and suppose that $A = B+C$. Then $B+C_B = [k]$.
	\end{lemma}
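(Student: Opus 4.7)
The plan is to verify the two inclusions $[k]\subseteq B+C_B$ and $B+C_B\subseteq [k]$ separately. Two basic observations feed both directions. First, since $\min(A)=0$ and $A=B+C$ with $B,C\subseteq\nats$, both $B$ and $C$ contain $0$; in particular $0\in B$ and $\max(B)\in B$. Second, since $\max(A)=\max(B)+\max(C)\leq k$ and the running hypothesis of the definition of $C_B$ gives $\max(B)\leq\max(C)$, we get the crucial inequality $2\max(B)\leq\max(A)\leq k$.

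For the forward inclusion I would argue pointwise: given $x\in [k]$, if $x\in A$ then $x\in B+C\subseteq B+C_B$ and we are done, so suppose $x\in [k]\setminus A$. The definition of $C_B$ was designed for exactly this situation. If $x<\max(B)$ then $x\in C_B$ directly (from the second union term), and $x=0+x\in B+C_B$ since $0\in B$. If $x\geq\max(B)$ then $x-\max(B)\in C_B$ (from the third union term), and $x=\max(B)+(x-\max(B))\in B+C_B$.

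For the reverse inclusion I would fix $b\in B$ and $c\in C_B$ and examine the three possible origins of $c$. If $c\in C$, then $b+c\in B+C=A\subseteq [k]$. If $c=s$ is a ``small'' leftover, meaning $s\in [k]\setminus A$ with $s<\max(B)$, then $b+c\leq\max(B)+(\max(B)-1)<2\max(B)\leq k$. If $c=s-\max(B)$ is a ``shifted'' leftover with $s\in [k]\setminus A$ and $s\geq\max(B)$, then $c\leq k-\max(B)$, so $b+c\leq\max(B)+(k-\max(B))=k$. The only sub-case requiring any thought is the middle one: it breaks down without the normalization $\max(B)\leq\max(C)$, because then $b+s$ could exceed $k$. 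Isolating $2\max(B)\leq k$ as the invariant that makes the construction coherent is thus the single substantive step in the proof; the rest is bookkeeping.
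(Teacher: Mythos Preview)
Your proof is correct and follows essentially the same two-inclusion strategy as the paper. The only presentational difference is in the reverse inclusion: the paper bounds $\max(C_B)\leq\max\{k-\max(B),\max(C)\}$ in one stroke (absorbing the ``small leftover'' case into the $\max(C)$ bound via $\max(B)-1<\max(C)$) and then observes $\max(B)+\max(C_B)\leq k$, whereas you do a three-way case split and isolate $2\max(B)\leq k$ explicitly; both amount to the same computation.
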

	\begin{proof}
		Since $C \subseteq C_B$ we have $A = B+C \subseteq B+C_B$. Moreover, by construction, $[k]\setminus A \subseteq B+C_B$. Thus, $[k] = A \union ([k]\setminus A) \subseteq B+C_B$. On the other hand, $\max(B+C_B) = \max(B)+\max(C_B)$. Now, $\max(C_B) \leq \max\{k-\max(B), \max(C)\}$ so that $\max(B)+\max(C_B) \leq \max\{k, \max(B)+\max(C)\} = \max\{k, \max(A)\} \leq k$. Moreover, $\min(B+C_B) = 0$. Thus, $B+C_B \subseteq [k]$.
	\end{proof}

\begin{figure}[!h]
\begin{adjustbox}{width=\columnwidth,center}
\begin{tabular}{lclclclclcl}
$\{0, 3\}$ & $+$ & $\{0, 4\}$ & $=$ & $\{0, 3, 4, 7\}$ &$\leadsto$&
$\{0, 3\}$ &$+$ & $\{0, 1, 2, 3, 4\}$ & $=$ & $[7]$ \\

$\{0, 3\}$ & $+$ & $\{0, 4\}$ & $=$ & $\{0, 3, 4, 7\}$ &$\leadsto$&
$\{0, 3\}$ &$+$ & $\{0, 1, 2, 3, 4, 5\}$ & $=$ & $[8]$ \\

$\{0, 3\}$ & $+$ & $\{0, 1, 3\}$ & $=$ & $\{0, 1, 3, 4, 6\}$ &$\leadsto$&
$\{0, 3\}$ &$+$ & $\{0, 1, 2, 3\}$ & $=$ & $[6]$ \\

$\{0, 3\}$ & $+$ & $\{0, 1, 3\}$ & $=$ & $\{0, 1, 3, 4, 6\}$ &$\leadsto$&
$\{0, 2, 3\}$ &$+$ & $\{0, 1, 3\}$ & $=$ & $[6]$ \\
\end{tabular}
\end{adjustbox}
\caption{\small Example of $k$-promotion}
\end{figure}
 
Now, each factor $B$ of $A$ appears in one or more factorizations. We may apply the procedure above to each such factorization. We let $F(B)$ denote the resulting set of factors of $[k]$. That is, for each $C\subseteq A$ such that $B+C = A$: if $\max(B) \leq \max(C)$ we let $B \in F(B)$; if $\max(B) \geq \max(C)$ we let $B_C \in F(B)$ (where $B_C$ is given by the procedure described above). (Note that this means that if there is some $C$ with $\max(C) = \max(B)$, then both $B, B_C \in F(B)$.)

\begin{figure}[h]
{\small
\begin{align*}
\{0, 2, 3, 4, 5, 6\} &=
\{0, 2, 3\} + \{0, 2, 3\} &&F(\{0, 2, 3\}) = \{\{0, 2, 3\};\,\{0, 1, 2, 3\}\}\\
&= \{0, 2\} + \{0, 3, 4\} &&F(\{0, 2\}) = \{\{0, 2\}\}\\
&=\{0, 2\} + \{0, 2, 3, 4\}
\end{align*}
}%
\caption{\small Example of $F(B)$ for $A = \{0, 2, 3, 4, 5, 6\}$}
\end{figure}

\begin{thm}
\label{thm:FB}
	If $B, D$ are different divisors of $A\in \mcal{Z}_{\leq k}$, then $F(B) \insect F(D) = \emptyset$. 
\end{thm}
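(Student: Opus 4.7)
The plan is to argue by contradiction. Assume $E \in F(B) \cap F(D)$ with $B \neq D$ both divisors of $A$, and classify how $E$ enters each of $F(B)$ and $F(D)$. By construction, each element of $F(B)$ arises from a factorization $A = B + C$ either as $E = B$ (when $\max(B) \leq \max(C)$) or as $E = B_C$ (when $\max(B) \geq \max(C)$), and similarly for $D$. Up to the symmetry $B \leftrightarrow D$, this leaves three sub-cases: (i) $E = B = D$; (ii) $E = B$ while $E = D_{C'}$ for some factorization $A = D + C'$; and (iii) $E = B_C$ and $E = D_{C'}$ for factorizations $A = B + C$, $A = D + C'$. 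Sub-case (i) is immediate, so the task is to rule out (ii) and (iii).

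The first reduction is to show that in both mixed sub-cases the co-divisor maxima coincide; set $m := \max(C) = \max(C')$. In sub-case (iii) this is immediate: $B_C + C = [k]$ and $D_{C'} + C' = [k]$ (by the preceding lemma) give $\max(E) = k - \max(C) = k - \max(C')$. In sub-case (ii), combine $\max(E) = \max(B) \leq \max(C)$ (so $\max(B) \leq \max(A)/2$) with $\max(E) = k - \max(C')$ and $\max(C') \leq \max(A)/2$ (from $\max(D) \geq \max(C')$) to obtain $k - \max(A)/2 \leq \max(A)/2$. Since $\max(A) \leq k$, this forces $\max(A) = k$, whence $\max(C) = \max(A) - \max(B) = k - \max(C') = \max(C')$.

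With $m$ in hand, I would derive the contradiction from an element $x \in B \setminus D$. In both mixed sub-cases, $B \subseteq E$ and $D \subseteq E = D_{C'}$, and the construction exhibits $E \setminus D$ as a union of ``extras'' from $[k] \setminus A$: elements $s \in [k]\setminus A$ with $s < m$, appended as $s$, and $s \geq m$, appended as $s - m$. Recalling that $\min(A) = 0$ forces $0 \in B, C, C'$, and that $m = \max(C) \in C$, each kind of extra yields a contradiction: if $x \in [k]\setminus A$, then $x \notin A$ contradicts $x = x + 0 \in B + C = A$; if $x = s - m$ with $s \in [k]\setminus A$, then $x + m = s \notin A$ contradicts $x + m \in B + C = A$. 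Hence $B \setminus D = \emptyset$; a symmetric argument (in (iii) using the extras of $B_C$ together with $m = \max(C') \in C'$ and $D + C' = A$; in (ii) simply noting $D \subseteq D_{C'} = E = B$) gives $D \subseteq B$, contradicting $B \neq D$.

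The main obstacle is the asymmetric sub-case (ii): one must first extract $\max(A) = k$ from the max constraints before concluding $\max(C) = \max(C')$, since otherwise the shift used in the promotion $D_{C'}$ would differ from $\max(C)$, and the second kind of extra could no longer be ruled out by forcing $x + m \in A$. Once this reduction is in place, the extras-based contradiction is uniform in both mixed sub-cases.
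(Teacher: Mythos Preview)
Your argument is correct and rests on the same ingredients as the paper's proof: the case split according to how the common element arises in $F(B)$ and $F(D)$, the lemma $B_C + C = [k]$ to compare maxima, and the structure of the ``extras'' $[k]\setminus A$ appended in the promotion. The organization, however, is genuinely different. In the mixed case (your (ii)), the paper first reduces to $D \subseteq B$ (immediate from $D \subseteq D_E$) and then forces the full chain $\max(B)=\max(C)=\max(D)=\max(E)$ before picking a single $s \in [k]\setminus A$ to separate $B$ from $D_E$; you instead squeeze the inequalities to get $\max(A)=k$, equate only the co-divisor maxima $m=\max(C)=\max(C')$, and then run a uniform extras argument to obtain both inclusions $B \subseteq D$ and $D \subseteq B$. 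In the promoted--promoted case (your (iii)), the paper argues by producing an element of one side not in the other and deriving the contradiction $\max(C)\neq\max(E)$ against $\max(C)=\max(E)$, whereas you establish $m$ first and then prove $B=D$. Your version has the advantage that the contradiction step is the same in (ii) and (iii) once $m$ is fixed, and it does not need to single out the case $A=[k]$ separately; the paper's version is slightly more direct in (ii) since $D \subseteq B$ comes for free.
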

	
\begin{proof}
	First note that for $A = [k]$ and any divisor $B$ of $A$ we have $F(B) = \{B\}$ so the claim follows trivially. Assume therefore that $A \subsetneq [k]$. We start by showing that $B \in F(B) \implies B \notin F(D)$.

	Suppose that $B \in F(B)$. Then there exists some $C$ with $\max(C) \geq \max(B)$ and $B+C = A$. We already know that $B \neq D$, and all other elements of $F(D)$ are of the form $D_E$. Assume therefore that there exists some $E$ with $\max(E)\leq \max(D)$ and $D+E = A$. If $D \not\subseteq B$, then we are done since $D \subseteq D_E$. Suppose therefore that $D \subseteq B$ so in particular $\max(D) \leq \max(B)$. We therefore have the chain
	\[\max(E) \leq \max(D) \leq \max(B) \leq \max(C)\]
	On the other hand, $\max(A) = \max(E) + \max(D) = \max(B)+\max(C)$. Therefore, $\max(B) = \max(D)$ and so $\max(E) = \max(C)$, and we have a chain of equalities
	\[\max(E) = \max(D) = \max(B) = \max(C).\]
	Let us denote this common element by $m$, and let $s \in [k]\setminus A \neq \emptyset$. There are two options:
	\begin{itemize}
		\item $s < m$, in which case $s \in D_E$. Since $s \notin A \supseteq B$, this shows that $B \neq D_E$.
		\item $s \geq m$, in which case $s-m \in D_E$. Assume for contradiction that $s-m \in B$. Since $m \in C$ we would have $s = (s-m)+m \in B+C = A$, which is a contradiction. Thus, $s-m \notin B$ and $B \neq D_E$.
	\end{itemize}

	We conclude that $B \in F(B) \implies B\notin F(D)$.

	All other elements of $F(B)$ are of the form $B_C$. So we will now show that $B_C \in F(B) \implies B_C \notin F(D)$. First note that by the argument above, $D \in F(D) \implies D \notin F(B)$, so that $D \in F(D) \implies B_C \neq D$. All other elements of $F(D)$ are of the form $D_E$.

	We are therefore assuming that there exist some $C, E$ such that
		\begin{align*}
			&A = B+C\stp \mbox{ and }\stp \max(B) \geq \max(C), \\
			&A = D+E\stp \mbox{ and }\stp \max(D) \geq \max(E).
		\end{align*}

	Assume for contradiction $B_C = D_E$. 

	First suppose that $B \subseteq D$. Since $B \neq D$, there must exist some $d \in D$ such that $d \notin B$. Since $d = d+0 \in D+E = A$ we have $d \notin [k]\setminus A$. However, $d \in D \subseteq D_E = B_C$, so that there must be some $s \in [k]\setminus A$ with $d = s-\max(C)$. Now, $s = d+\max(C) \notin A$ implies $\max(C) \notin E$. Thus, $\max(E) \neq \max(C)$. However, $[k] = B_C+C = D_E+E$ implies $k = \max(B_C)+\max(C) = \max(D_E)+\max(E)$. Since $B_C = D_E$ we have $\max(B_C) = \max(D_E)$ so $\max(C) = \max(E)$, which is a contradiction.

	On the other hand, $B\not\subseteq D$ implies there is some $b \in B$ such that $b\notin D$. Analogous argument to the one above then shows that $\max(E) \neq \max(C)$, which again contradicts the assumption $B_C = D_E$.

\end{proof}

Theorem \ref{thm:FB} is enough to establish the maximality of $d([k])$ among sets of $\mathcal{Z}_{\leq k}$. The following two claims will help to show that it is also the unique maximum.

\begin{lemma}
\label{claim:0oddunique}
	Let $3 \leq k \in \nats$ be odd, and suppose that $A \in \mcal{Z}_{\leq k}$ with $A \subsetneq [k]$. Then $F_k = \{0, (k+1)/2\}$ is a factor of $[k]$ which does not arise from $k$-promotion. That is, $F_k \notin F(B)$ for any factor $B$ of $A$.
\end{lemma}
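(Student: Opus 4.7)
Set $m = (k+1)/2$ so that $k = 2m - 1$, and write $F_k = \{0, m\}$. The plan has two parts: first exhibit $F_k$ as a divisor of $[k]$, then show $F_k$ cannot appear in $F(B)$ for any divisor $B$ of any $A \in \mcal{Z}_{\leq k}$ with $A \subsetneq [k]$. The first part is the one-line identity $\{0, m\} + \{0, 1, \ldots, m-1\} = [k]$.

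For the second part, I would suppose for contradiction that $F_k \in F(B)$ for some divisor $B$ of some $A \in \mcal{Z}_{\leq k}$ with $A \subsetneq [k]$. By the definition of $F(B)$, two cases arise: either (i) $F_k = B$ with some $C$ satisfying $A = B + C$ and $\max(B) \leq \max(C)$, or (ii) $F_k = B_C$ for some $C$ with $A = B + C$ and $\max(B) \geq \max(C)$. Case (i) falls immediately to a size argument: $\max(A) = \max(B) + \max(C) \geq 2m = k + 1$, contradicting $A \subseteq [k]$.

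For case (ii), since $A$ is $0$-rooted we have $0 \in B$, and the containment $B \subseteq B_C = \{0, m\}$ leaves only $B \in \{\{0\}, \{0, m\}\}$. The sub-case $B = \{0\}$ is dispatched quickly: $\max(C) \leq \max(B) = 0$ forces $C = \{0\}$ and hence $A = \{0\}$; then expanding the definition of $B_C$ directly gives $B_C = [k] \neq F_k$ (using $k \geq 3$), a contradiction. The decisive sub-case is $B = \{0, m\}$: here $B = B_C$, and the symmetric form of the preceding lemma (with the roles of $B$ and $C$ swapped in the construction, which works verbatim) yields $B_C + C = [k]$. Combining this with $A = B + C = B_C + C$ gives $A = [k]$, contradicting $A \subsetneq [k]$.

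The main obstacle is recognizing this last collapse: that $B = B_C = \{0, m\}$ combined with the lemma immediately forces $A = [k]$, so no proper subset of $[k]$ can possibly generate $F_k$ through $k$-promotion. Once this observation is in hand the proof is essentially bookkeeping; the real content is that the unique two-element divisor of $[k]$ of the form $\{0, m\}$ with $m = (k+1)/2$ is "irreducibly tied" to the full interval and cannot be inherited by any proper sub-factorization.
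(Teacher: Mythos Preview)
Your proof is correct and follows the same case analysis as the paper. In the final sub-case $B=\{0,m\}$ your argument is marginally slicker: you observe $B=B_C$ and conclude $A=B+C=B_C+C=[k]$ directly, whereas the paper reaches the same contradiction by first deducing $[(k-1)/2]\subseteq C$ from $C+F_k=[k]$ and then computing $[k]\subseteq B+C=A$.
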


\begin{proof}
	It is clear that $F_k$ is a factor of $[k]$, since
	\[[k] = [(k-1)/2]+\{0, (k+1)/2\}.\]
	Assume for contradiction that $F_k \in F(B)$ for some factor $B$ of $A$. Then either:
	\begin{itemize}
		\item $B = F_k$ and $B \in F(B)$. That is, there exists some $C$ for which $C+F_k = A$ and $\max(C) \geq \max(F_k)$. But $\max(F_k) = (k+1)/2$ and $(k+1)/2+(k+1)/2 = k+1 > k = \max(A)$. This is a contradiction.
		\item $F_k = B_C$ for some $C$ and $B_C \in F(B)$. That is, there exists some $C$ with $\max(B) \geq \max(C)$ and $B+C = A$, and $B_C = F_k$. Since $B \subseteq B_C = F_k$ we must have $B \subseteq \{0, (k+1)/2\}$. If $B = \{0\}$, the assumption $\max(B) \geq \max(C)$ implies $C = \{0\}$, in which case $B_C = [k] \neq F_k$ (since $k \geq 3$). Thus, $B = \{0, (k+1)/2\}$.

		However, from $C+B_C = [k]$ we obtain $C+F_k = [k]$. In particular, $[(k-1)/2] \subseteq C$ in which case $[k] \subseteq B+C = A$, contradicting the assumption $A \subsetneq [k]$.
	\end{itemize}
\end{proof}

In contrast to the odd case, it is straightforward to verify that all factors of $[4]$, for example, arise from $4$-promotion. We must weaken the hypothesis in the previous lemma from an absolute statement to a relative one:

\begin{lemma}
\label{claim:even0unique}
	Let $4 \leq k \in \nats$ be \emph{even}. Then for any $A \in \mcal{Z}_{\leq k}$ with $A \subsetneq [k]$, there exists some factor $F_k$ of $[k]$, such that $F_k \notin F(B)$ for any factor $B$ of $A$.
\end{lemma}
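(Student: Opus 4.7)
The plan is to choose $F_k$ via a dichotomy on $A$, using one of two candidate factors of $[k]$: $G_1 = \{0, k/2\}$ (a factor of $[k]$ via $G_1 + \{0, 1, \ldots, k/2\} = [k]$) and $G_2 = \{0, 1, k/2+1\}$ (a factor of $[k]$ via $G_2 + \{0, 1, \ldots, k/2-1\} = [k]$, the sum splitting into the three intervals $[0, k/2-1]$, $[1, k/2]$, and $[k/2+1, k]$; this uses $k \geq 4$ so the complementary factor is nonempty). We split on the condition ($\star$): there exists $C_0$ with $\{0, k/2\} + C_0 = A$ and $\max C_0 \geq k/2$.

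If ($\star$) fails, we set $F_k := G_1$ and show $G_1 \notin F(B)$ for every divisor $B$ of $A$. There are only two ways for $G_1 \in F(B)$ to occur: either $B = G_1 \in F(B)$, which demands a complementary $C$ with $\max C \geq k/2$ and is ruled out by the negation of ($\star$); or $G_1 = B_C$ with $B \subsetneq G_1$, forcing $B = \{0\}$, $C = \{0\}$, $A = \{0\}$, so that $B_C = [k] \neq G_1$ for $k \geq 3$.

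If ($\star$) holds, then $\max A \leq k$ together with $\max A = k/2 + \max C_0$ forces $\max C_0 = k/2$ and $\max A = k$; hence both $k$ and $k/2 = 0 + k/2$ lie in $A$. We then set $F_k := G_2$ and rule out $G_2 \in F(B)$ for each possible $B \subseteq G_2$ (noting $0 \in B$, since $0 \in A$ is attained only by $0 + 0$). For $B = G_2$: the case $B \in F(B)$ needs $\max C \geq k/2+1$, forcing $\max A \geq k+2$; the case $B_C = B$ requires no additions in the definition of $B_C$, forcing $A = [k]$, excluded. For $B = \{0, k/2+1\}$ with $B_C = G_2$: the bound $\max A \leq k$ gives $\max C \leq k/2-1$, so $A \subseteq C \cup (C + k/2+1) \subseteq [0, k/2-1] \cup [k/2+1, k]$, which misses $k/2$ and contradicts $k/2 \in A$. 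For $B = \{0, 1\}$ or $B = \{0\}$, we would have $\max A \leq 2 < k$, impossible.

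The hard part will be the subcase $B = \{0, k/2+1\}$ in the second branch, where the crucial point is leveraging $k/2 \in A$ (a consequence of ($\star$)) to derive the contradiction; the other subcases reduce to routine dimensional bounds on $\max A$.
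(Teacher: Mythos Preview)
Your approach is different from the paper's: there the dichotomy is $A = [k]\setminus\{2\}$ versus $A \neq [k]\setminus\{2\}$, with $F_k = \{0,2\}$ in the first case and $F_k = \{0,1,3,5,\ldots,k-1\}$ in the second. Your pair $G_1 = \{0,k/2\}$, $G_2 = \{0,1,k/2+1\}$ with the dichotomy on~$(\star)$ is a viable alternative, but there are two genuine gaps in the execution.

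First, in the $\neg(\star)$ branch your case split is incomplete. You write that $G_1 \in F(B)$ forces either ``$B = G_1 \in F(B)$'' or ``$G_1 = B_C$ with $B \subsetneq G_1$''. But the possibility $G_1 = B_C$ with $B = G_1$ is not covered: the promotion $B_C$ may equal $B$ whenever every element contributed by $[k]\setminus A$ already lies in $B$. To close this, observe that under $\neg(\star)$ any cofactor $C$ of $B = \{0,k/2\}$ has $\max C < k/2$, so $\max A < k$ and hence $k \in [k]\setminus A$; the promotion then places $k-\max C > k/2$ into $B_C$, and this new element is not in $G_1$.

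Second, in the $(\star)$ branch, the subcase $B = G_2$ with $B_C = G_2$ is dismissed via ``$B_C = B$ requires no additions in the definition of $B_C$, forcing $A = [k]$''. This is false: $B_C = B$ only says that every element added during promotion was already present in $B$, not that $[k]\setminus A$ is empty. Here $\max C = k - (k/2+1) = k/2-1$; checking which $s \in [k]\setminus A$ can satisfy $s \in B$ (for $s < k/2-1$) or $s-(k/2-1) \in B$ (for $s \geq k/2-1$) only yields $[k]\setminus A \subseteq \{k/2-1\}$, so $A = [k]\setminus\{k/2-1\}$. The contradiction then comes from $k/2-1 = \max C \in C \subseteq A$ (using $0 \in B$), not from $A = [k]$.
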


\begin{proof}
	For $A = [k]\setminus \{2\}$ we use $F_k = \{0, 2\}$. This is indeed a factor of $[k]$, since $[k] = \{0, 2\} + [k-2]$ (for example). On the other hand, for any factor $B$ of $A$ we have $2 \notin B$. Thus, $F_k \in F(B)$ if and only if $B_C = F_k$ for some $C$. That is, there exists some $C$ with $\max(B) \geq \max(C)$ and $B+C = A$, and $B_C = F_k$. Since $B \subseteq B_C = F_k = \{0, 2\}$ and $2 \notin B$ we must have $B = \{0\}$. Then $\max(B) \geq \max(C)$ implies $C = \{0\}$. Then $A = \{0\}$ and $B_C = [k] \neq \{0, 2\}$. Thus, $F_k \notin F(B)$ for any factor $B$ of $A$.

	If $A \neq [k]\setminus \{2\}$ we use $F_k = \{0, 1, 3, 5, \ldots, k-1\}$. First,
	\[[k] = \{0, 1\} + \{0, 1, 3, 5, \ldots, k-1\}\]
	shows that $F_k$ is indeed a factor of $[k]$. Assume for contradiction that $F_k \in F(B)$ for some factor $B$ of $A$. Then either:
	\begin{itemize}
		\item $B = F_k$ and $B \in F(B)$. That is, there exists some $C$ for which $C+F_k = A$ and $\max(C) \geq \max(F_k) = k-1$. This is clearly impossible since $k \geq \max(A) = \max(B)+\max(C) \geq 2(k-1) = 2k-2 \implies k \leq 2$, contradicting our assumption that $k \geq 4$.

		\item $F_k = B_C$ for some $C$ and $B_C \in F(B)$. That is, there exists some $C$ with $\max(B) \geq \max(C)$ and $B+C = A$, and $B_C = F_k$. We have $C+B_C = [k]$, that is $C+F_k = [k]$. Since $\max({F_k}) = (k-1)$ this implies $\max(C) \leq 1$. Thus $C = \{0, 1\}$.

		Now, $B \subseteq B_C =  F_k$. Since $A \subsetneq [k] = C+F_k$ we must have $B \subsetneq F_k$. We will show that the only element of $F_k$ missing from $B$ is $1$. Let $2x+1 \in B_C\setminus B$. Then,
		\begin{align*}
			A &= B+C \subseteq \{0, 1, 3, \ldots, 2x-1, 2x+3, \ldots, k-1\} + \{0, 1\}\\
			 &= [k]\setminus\{2x+1, 2x+2\}.
		\end{align*}
		Thus, $\{2x+1, 2x+2\} \subseteq [k]\setminus A$. In particular, $2x = (2x+1)-\max(C) \in B_C$. Since the only even element in $B_C$ is $0$, we must have $x=0$. Thus, the only element of $B_C$ missing from $B$ is $1$. In other words, $B = \{0, 3, 5, \ldots, k-1\}$. Then, $A = B+C = [k]\setminus \{2\}$, contradicting our assumption that $A \neq [k]\setminus \{2\}$.
	\end{itemize}
\end{proof}

These two lemmas and Theorem \ref{thm:FB} together imply:

\begin{thm}
\label{crlodd}
	The set $[k]$ is the unique maximum of $d(\cdot)$ in $\mcal{Z}_{\leq k}$. 
\end{thm}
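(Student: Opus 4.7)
The plan is to combine Theorem~\ref{thm:FB} (which yields the weak inequality $d(A) \leq d([k])$) with Lemmas~\ref{claim:0oddunique} and~\ref{claim:even0unique} (which upgrade it to a strict inequality whenever $A \subsetneq [k]$). Before the main argument I would dispose of the base cases $k \in \{0, 1, 2\}$ by direct enumeration of $\mcal{Z}_{\leq k}$, which in each case contains at most four sets; this step is necessary because Lemma~\ref{claim:0oddunique} requires odd $k \geq 3$ and Lemma~\ref{claim:even0unique} requires even $k \geq 4$.

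For the weak bound when $k \geq 3$, I use the construction of $F(B)$ from just before Theorem~\ref{thm:FB}. Given any $A \in \mcal{Z}_{\leq k}$ and any divisor $B$ of $A$, fix a factorization $A = B + C$; then $F(B)$ contains either $B$ (when $\max(B) \leq \max(C)$) or $B_C$ (when $\max(B) \geq \max(C)$), and by the lemma just before Theorem~\ref{thm:FB} (with the roles of the two factors swapped if necessary), the element so produced is a divisor of $[k]$. Thus each $F(B)$ is a non-empty set of divisors of $[k]$, and by Theorem~\ref{thm:FB} the family $\{F(B)\}_{B}$ is pairwise disjoint as $B$ ranges over the divisors of $A$. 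Summing sizes gives
\[d(A) \;\leq\; \sum_B |F(B)| \;=\; \Bigl|\bigcup_B F(B)\Bigr| \;\leq\; d([k]).\]
For strictness when $A \subsetneq [k]$, I invoke Lemma~\ref{claim:0oddunique} (if $k$ is odd) or Lemma~\ref{claim:even0unique} (if $k$ is even) to obtain a divisor $F_k$ of $[k]$ that lies in no $F(B)$; then $\bigcup_B F(B)$ is a proper subset of the set of divisors of $[k]$, the last inequality above becomes strict, and we conclude $d(A) < d([k])$.

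I do not anticipate any real obstacle beyond this point, since the substantive content has already been absorbed into Theorem~\ref{thm:FB} and the two lemmas. The only points requiring a moment of care are verifying that each $F(B)$ is non-empty (which follows from the existence of at least one cofactor $C$ with $B + C = A$ whenever $B$ is a divisor of $A$) and handling the small-$k$ enumerations, since the lemmas as stated exclude $k \leq 2$.
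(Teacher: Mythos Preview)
Your proposal is correct and follows essentially the same approach as the paper: combine the disjointness of the sets $F(B)$ from Theorem~\ref{thm:FB} with the non-emptiness of each $F(B)$ to obtain $d(A) \leq d([k])$, handle $k \leq 2$ by direct enumeration, and invoke Lemmas~\ref{claim:0oddunique} and~\ref{claim:even0unique} for strictness when $k \geq 3$.
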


\begin{proof}
	Given some $0$-rooted set $A \subsetneq [k]$ we have a map $B \mapsto F(B)$ taking each factor $B$ of $A$, to a set of factors of $[k]$. Theorem \ref{thm:FB} implies
	\[d([k]) \geq \sum_{B \mbox{ \scriptsize divides } A}\crd{F(B)} \geq \sum_{B \mbox{ \scriptsize divides } A} 1 = d(A).\]
	Thus, $[k]$ is a maximum of $d(\cdot)$ in $\mathcal{Z}_{\leq k}$. It is easy to see by direct computation that $[k]$ is the unique maximum for $k=0, 1, 2$ (with $1, 2, 3$ factors respectively). Lemma \ref{claim:0oddunique} and Lemma \ref{claim:even0unique} show that it is also the unique maximum for $k \geq 3$.
\end{proof}
\section{Lunar Arithmetic}
\label{s:lunar}
Applegate, LeBrun, and Sloane \cite{lunar} define base-$b$ lunar\footnote{Originally published under the name `dismal arithmetic', the authors have come to prefer `lunar arithmetic' instead. See \cite{brady} and the relevant OEIS entries \cite{sloane}.} addition $\lp$ as the digitwise $\max$ operation. To multiply two digits we take their $\min$. Long multiplication is similar to regular multiplication, except that lunar addition implies there are no carries. The figures below reproduce the examples from their article:

\begin{figure}[!h]
    \begin{subfigure}[b]{0.5\linewidth}
        \centering
        \begin{tabular}{lr}
		& $169$ \\
		$\lp$ & $248$ \\ \hline
		& $269$
		\end{tabular}
        \caption{base 10 lunar addition}
        \label{fig:10lunarplus}
    \end{subfigure}\hfill
    \begin{subfigure}[b]{0.5\linewidth}
    	\centering
      \begin{tabular}{lr}
		& $169$ \\
		$\lm$ & $248$ \\ \hline
		& $168$ \\
		$\lp$ & $144$\stp \\
		$\lp$ & $122$\stp\stp \\ \hline
		&$12468$
	   \end{tabular}
        \caption{base 10 lunar multiplication}
        \label{fig:10lunartimes}
    \end{subfigure}
    \caption{\small Lunar arithmetic}\label{fig:lunararithmetic}
\end{figure}

Le Brun et al.~then show that $\lp$ and $\lm$ are commutative and associative, and $\lm$ distributes over $\lp$. They proceed to study analogues of number-theoretic constructions ``including primes, number of divisors, sum of divisors, and the partition function.'' \cite{lunar} 

In particular, they define $d_b(n)$ as the number of lunar divisors of $n$ in base $b$. Section 6 of their paper contains a series of conjectures about the properties of $d_b(n)$, which we reproduce below for ease of reference. Note that $\base{b}{a_ma_{m-1}\ldots a_1}$ denotes a base-$b$ representation. Following are Conjectures 12-14 in \cite{lunar}.

\setcounter{conj}{11}

\begin{conj}[LeBrun et al.]
\label{conjbases}
	In any base $b \geq 3$, among all $k$-digit numbers $n$, $d_b(n)$ has a unique maximum at $n = (b^k-1)/(b-1) = 111\ldots 1|_b$.
\end{conj}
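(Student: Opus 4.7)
The plan is to translate lunar multiplication into a layer-wise sumset operation, reduce the conjecture to a weighted maximality statement for sumset divisors, and then prove that statement by adapting the $F$-machinery of Theorem~\ref{thm:FB} to a weighted setting.

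\textbf{Step 1: Layer correspondence.} I would first establish that a base-$b$ lunar number $n$ with digits $n_i\in\{0,\ldots,b-1\}$ corresponds to its nested layer-tuple $(N^{(1)}\supseteq\cdots\supseteq N^{(b-1)})$ defined by $N^{(m)}=\{i:n_i\geq m\}$. The key structural fact is that, since $\min(d,d')\geq m$ iff $d,d'\geq m$, lunar multiplication is layer-wise sumset $(n\lm n')^{(m)}=N^{(m)}+N'^{(m)}$, and nestedness is preserved. Consequently $a\,|\,n$ iff there exist nested $C^{(1)}\supseteq\cdots\supseteq C^{(b-1)}$ with $A^{(m)}+C^{(m)}=N^{(m)}$ for every $m$.

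\textbf{Step 2: Exact count for the repunit; general upper bound.} For the repunit, $N^{(1)}=[k-1]$ and $N^{(m)}=\emptyset$ for $m\geq 2$. Since the higher layers vanish, taking $C^{(m)}=\emptyset$ for $m\geq 2$ always realises the factorisation, so once $A^{(1)}$ is chosen as a sumset divisor of $[k-1]$ the higher layers of $a$ are arbitrary nested subsets of $A^{(1)}$ --- equivalently, each element of $A^{(1)}$ independently picks a ``top layer'' in $\{1,\ldots,b-1\}$ --- yielding exactly $(b-1)^{|A^{(1)}|}$ divisors with first layer $A^{(1)}$. This gives the exact formula
\[ d_b(1_k) \;=\; \sum_{A\,|\,[k-1]} (b-1)^{|A|}. \]
For general $k$-digit $n$ the same parametrisation yields only the upper bound $d_b(n)\leq\sum_{A\,|\,N^{(1)}}(b-1)^{|A|}$: whenever some $N^{(m)}\neq\emptyset$ for $m\geq 2$, the ``all-cutoffs-$1$'' extension would force $A^{(m)}=\emptyset$ and is inadmissible, making the count strictly smaller than the parameter space.

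\textbf{Step 3: Weighted maximality.} Setting $w=b-1\geq 2$, it then suffices to prove
\[ \sum_{A\,|\,N^{(1)}}w^{|A|} \;\leq\; \sum_{A\,|\,[k-1]}w^{|A|}, \]
with equality iff $N^{(1)}=[k-1]$, for every $N^{(1)}\subseteq[0,k-1]$ with $k-1\in N^{(1)}$. When $N^{(1)}$ is $0$-rooted, this is a direct weighted refinement of Theorem~\ref{thm:FB}: each divisor $A\,|\,N^{(1)}$ maps into a disjoint set $F(A)$ of divisors of $[k-1]$ with $A\subseteq B$ (hence $w^{|A|}\leq w^{|B|}$) for every $B\in F(A)$; Lemmas~\ref{claim:0oddunique}--\ref{claim:even0unique} then supply a divisor of $[k-1]$ missing from every $F(A)$ whenever $N^{(1)}\subsetneq[k-1]$, giving the strict deficit. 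When $N^{(1)}$ is not $0$-rooted, I would write $N^{(1)}=M+s$ with $M$ $0$-rooted and $s\geq 1$, and observe that divisors of $N^{(1)}$ biject with pairs $(A_0,t)$ ($A_0\,|\,M$, $t\in[0,s]$) of cardinality $|A_0|$, so the weighted sum equals $(s+1)\sum_{A_0\,|\,M}w^{|A_0|}\leq(s+1)\sum_{A_0\,|\,[k-1-s]}w^{|A_0|}$; the conclusion then reduces to an injection of the $(s+1)$ shifted copies of the divisors of $[k-1-s]$ into divisors of $[k-1]$ that leaves $[k-1]$ itself out of the image.

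\textbf{The main obstacle} is the shifted inequality at the end of Step 3. For $w=1$ it is \emph{false} --- Theorem~\ref{crleven} asserts exactly that in binary the shifted interval $\{1,\ldots,k\}$ beats the full $[k]$ --- so the proof must use the exponential weighting essentially, and the hypothesis $b\geq 3$ enters precisely at the point where the weight $w^k$ attached to the top divisor $[k-1]$ absorbs the $(s+1)$-fold over-counting introduced by the shift. Packaging this into a clean injection (most likely of the form $(A_0,t)\mapsto A_0\cup E_t$ for a ``filler'' set $E_t$ near the top of $[0,k-1]$) and verifying that it does not land on $[k-1]$ for any $s\geq 1$ is the principal technical task.
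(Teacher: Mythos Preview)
Your Steps 1--2 and the $0$-rooted half of Step~3 coincide with the paper's argument: the layer correspondence is the set-array representation of multisets in \S\ref{s:bases}; the exact repunit formula and the upper bound for general $n$ (with strictness when a higher layer is nonempty) are Theorem~\ref{propdivbase} and Lemma~\ref{manytoone}; and the weighted $0$-rooted inequality is Corollary~\ref{crl:0bases}, proved exactly as you describe via $k$-promotion together with Lemmas~\ref{claim:0oddunique}--\ref{claim:even0unique}.

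The divergence, and the gap, is in the shifted inequality at the end of Step~3. Your formulation --- inject the $(s+1)$ copies $\{(A_{0},t):A_{0}\mid[k-1-s],\,0\le t\le s\}$ into the divisors of $[k-1]$ via $(A_{0},t)\mapsto A_{0}\cup E_{t}$ --- is not what the paper does, and as an \emph{unweighted} $(s+1)$-to-one injection it cannot exist: Theorem~\ref{propdoubling} shows $d([k-1])<2\,d([k-2])$, so already for $s=1$ there are too few targets. The paper never attempts such an injection. Instead it proves the doubling inequality
\[
2\sum_{m} H(n,m)\,w^{m} \;<\; \sum_{m} H(n+1,m)\,w^{m}\qquad(w\ge 2)
\]
(Corollary~\ref{crl:headstrongbounds}) by first deriving a Pascal-type recursion $H(n+1,m+1)=H(n,m)+h(n,m)$ with $h\ge 0$ from the generating-function identity of Theorem~\ref{prop:headstrong}, and then iterates to obtain a factor $2^{s}\ge s+1$. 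The combinatorial content of that recursion is a single \emph{cardinality-increasing} injection $A_{0}\mapsto (A_{0}+1)\cup\{0\}$ from divisors of $[n-1]$ to divisors of $[n]$: it raises the weight by a factor of $w$, misses the divisor $\{0\}$, and after $s$ iterations converts the deficit $(s+1)$ into the absorbable $w^{s}$. That one-step, weight-multiplying map --- not an $(s+1)$-fold weight-preserving one --- is the correct replacement for your filler-set idea, and it is precisely where the hypothesis $b\ge 3$ (i.e.\ $w\ge 2$) enters.
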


\begin{conj}[LeBrun et al.]
\label{conjeven}
	In base $2$, among all $k$-digit numbers $n$, the maximal value of $d_2(n)$ occurs at $n = 2^k-2 = \base{2}{111\ldots 10}$, and this is the unique maximum for $n \neq 2, 4$.
\end{conj}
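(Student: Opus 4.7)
The plan is to recast Conjecture \ref{conjeven} in the additive-divisor language of \S\ref{s:lunar}, reduce the general case to the $0$-rooted case already handled by Theorem \ref{crlodd}, and finish by a one-variable numerical comparison over the values $d([j])$. Under the binary correspondence, a $k$-digit base-$2$ number $n$ corresponds to a subset $A(n) \subseteq [0,k-1]$ with $\max A(n) = k-1$ and $d_2(n) = d(A(n))$; the candidate extremum $n = 2^k - 2$ corresponds to $A = \{1, 2, \dots, k-1\}$. So the conjecture becomes: among subsets $A \subseteq [0,k-1]$ with $\max A = k-1$, the set $\{1,\dots,k-1\}$ maximizes $d(\cdot)$, uniquely outside a short list of exceptions.

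The key structural step is a translation formula. For any finite $A \subseteq \nats$ with $m := \min A$, every divisor $B$ of $A$ has $\min B = j$ for some $j \in \{0,\dots,m\}$; writing $B = j + B'$ identifies $B'$ as a $0$-rooted divisor of $A - m$, and conversely every pair $(j,B')$ recovers a distinct divisor of $A$. Hence
\[
d(A) = (m+1) \cdot d(A-m).
\]
Since $A - m \in \mcal{Z}_{\leq (k-1)-m}$, Theorem \ref{crlodd} gives $d(A-m) \leq d([k-1-m])$ with equality iff $A - m = [k-1-m]$, i.e.\ iff $A = \{m, m+1, \dots, k-1\}$. The problem therefore collapses to a one-variable optimization: over $m \in \{0,1,\dots,k-1\}$, find the maximizer of $\phi_k(m) := (m+1) \cdot d([k-1-m])$, which one expects to be $m = 1$.

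Two inequalities are enough. To beat $m = 0$ requires $d([k-1]) < 2\,d([k-2])$, an \emph{upper} bound on the ratio $d([j])/d([j-1])$. To beat $m \geq 2$ requires $2\,d([k-2]) > (m+1)\,d([k-1-m])$, which, when expanded into a product of consecutive ratios, is most binding at $m = 2$ and demands the \emph{lower} bound $d([j])/d([j-1]) > 3/2$. Both bounds should drop out of the finite-calculus / forward-difference analysis of restricted compositions promised in \S\ref{s:counting}, since $d([j])$ equals the number of compositions of $j+1$ whose first part is at least as large as every other part. The aim is to trap the consecutive ratio inside the window $(3/2,\,2)$ for $j$ sufficiently large.

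The main obstacle is the tightness of this window. Unrestricted compositions of $n$ number exactly $2^{n-1}$, doubling at each step, so the upper bound $<2$ must be a genuine strict inequality uniform in $j$; likewise the lower bound $3/2$ leaves little slack and binds at the hardest case $m = 2$. Once the window is in hand, a small initial segment of $k$-values is treated by direct enumeration, which both confirms the location of the maximum and exposes the non-unique ties (most notably $d_2(2) = d_2(3)$, accounting for the $n = 2$ exception), and will reveal whatever other small ties arise from $\phi_k(2) = \phi_k(1)$ before the asymptotic bounds take over.
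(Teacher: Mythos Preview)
Your proposal is correct and follows essentially the same architecture as the paper's proof: reduce via the translation formula $d(A)=(m+1)\,d(A-m)$ (the paper's Lemma~\ref{L15}), invoke Theorem~\ref{crlodd} to replace $A-m$ by a full interval, and then compare the values $(m+1)\,d([k-1-m])$ using a two-sided ratio bound on consecutive $d([j])$'s. The paper establishes precisely your window --- the upper bound $d([j])<2\,d([j-1])$ (Theorem~\ref{propdoubling}) and the lower bound $3\,d([j-1])\le 2\,d([j])$ (Theorem~\ref{uppereven}) --- not through forward differences but by decomposing $d([j])$ as a column-sum of generalized Fibonacci numbers $F(n,\cdot)$ and proving the pointwise inequalities $F(n,k+1)\le 2F(n,k)$ and $3F(n,k)\le 2F(n,k+1)$; the equality case $d([2])/d([1])=3/2$ that you anticipate is exactly what produces the $k=4$ tie.
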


\begin{conj}[Part I; LeBrun et al.]
\label{conjodd}
	In base 2, among all \emph{odd} $k$-digit numbers $n$, $d_2(n)$ has a unique maximum at $n = 2^k-1 = \base{2}{111\ldots 111}$.
\end{conj}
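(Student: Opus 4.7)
The plan is to reduce Conjecture~\ref{conjodd} to Theorem~\ref{crlodd} via the correspondence between lunar binary arithmetic and sumsets developed in \S\ref{s:lunar}. To each finite $A \subseteq \nats$ associate the binary number $\chi(A) = \sum_{a \in A} 2^a$, so that position $i$ of $\chi(A)$ is a $1$ exactly when $i \in A$; then $\chi$ is a bijection between finite subsets of $\nats$ and nonnegative integers. The sumset correspondence gives $\chi(A+B) = \chi(A) \lm \chi(B)$, from which the lunar divisors of $\chi(A)$ are exactly the $\chi(B)$ for $B$ a sumset divisor of $A$, so $d_2(\chi(A)) = d(A)$.

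Under this bijection, the odd $k$-digit binary numbers correspond precisely to the class $\mcal{Z}_{k-1}$: an odd number has a $1$ in position $0$ (so the associated set is $0$-rooted), and being a $k$-digit number forces the leading $1$ into position $k-1$ (so the associated set has maximum $k-1$). Furthermore,
\[\chi([k-1]) = \sum_{i=0}^{k-1} 2^i = 2^k - 1 = \base{2}{111\ldots 1},\]
where the rightmost representation consists of $k$ ones. So Conjecture~\ref{conjodd} translates into the statement that $d(\cdot)$ is uniquely maximized on $\mcal{Z}_{k-1}$ at $[k-1]$.

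Since $\mcal{Z}_{k-1} \subseteq \mcal{Z}_{\leq k-1}$ and $[k-1] \in \mcal{Z}_{k-1}$, Theorem~\ref{crlodd}---which identifies $[k-1]$ as the unique maximizer of $d(\cdot)$ over the larger class $\mcal{Z}_{\leq k-1}$---immediately yields the same uniqueness over $\mcal{Z}_{k-1}$. Hence the conjecture follows. The only genuine content beyond this bookkeeping is the lunar/sumset correspondence itself, which is the subject of \S\ref{s:lunar} and is invoked here as a black box; there is no real obstacle, since all the combinatorial work has already been absorbed into Theorem~\ref{crlodd}.
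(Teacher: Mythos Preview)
Your proposal is correct and follows essentially the same approach as the paper: the paper also derives Conjecture~\ref{conjodd} (Part I) as an immediate corollary of Theorem~\ref{crlodd} via the monoid homomorphism $\beta$ of \S\ref{s:lunar}, noting that odd binary numbers correspond to $0$-rooted sets. Your write-up is in fact slightly more explicit than the paper's one-line deduction, since you spell out that odd $k$-digit numbers correspond precisely to $\mcal{Z}_{k-1}$ and then restrict the conclusion of Theorem~\ref{crlodd} from $\mcal{Z}_{\leq k-1}$ to this subclass.
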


\setcounter{conj}{13}

\begin{conj}[Part II; LeBrun et al.]
\label{conjodd2}
	In base 2, among all \emph{odd} $k$-digit numbers $n$, if $k \geq 3$ and $k \neq 5$, the second-largest value of $d_2(n)$ occurs at $n = 2^k-3 = \base{2}{111\ldots 101}$, and possibly other values of $n$.
\end{conj}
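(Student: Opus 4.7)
First, I would translate via the correspondence of Section \ref{s:lunar}: odd $k$-digit integers correspond bijectively to $0$-rooted subsets of $[0,k-1]$ with maximum $k-1$, that is, to elements of $\mcal{Z}_{k-1}$, and $n = 2^k-3$ corresponds to the set $[k-1]\setminus\{1\}$. Setting $m = k-1$, the claim becomes: for every $m\geq 2$ with $m\neq 4$, and every $A\in\mcal{Z}_m$ with $A\neq [m]$, we have $d(A)\leq d([m]\setminus\{1\})$. The small cases $m\in\{2,3\}$ are handled by direct enumeration (every non-$[m]$ element of $\mcal{Z}_m$ has $d = 2$), and the exclusion $m = 4$ (i.e., $k = 5$) is justified by the counterexample $\{0,1,3,4\}\in\mcal{Z}_4$, which has $d = 4 > 2 = d([4]\setminus\{1\})$. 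From here on I assume $m\geq 5$.

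The strategy is to mimic the $k$-promotion of Section \ref{s:0-divisors}, but targeting $[m]\setminus\{1\}$ in place of $[m]$, splitting on whether $1\in A$. In the case $1\notin A$, we have $A\subseteq [m]\setminus\{1\}$, and an $(m,\{1\})$-promotion works almost verbatim: for each factorization $A = B+C$ with $\max B\leq \max C$, I extend $C$ by adjoining, for each $s\in ([m]\setminus\{1\})\setminus A$, either $s$ (if $s<\max B$) or $s-\max B$ (if $s>\max B$). The only subtlety is the boundary case $s = \max B + 1$, which would otherwise insert the forbidden digit $1$; I resolve this by appending $s$ directly, which is valid when $\max B \leq (m-1)/2$, or, in the degenerate configurations where $\max B = \max C = m/2$ for even $m$, by noting that those particular $A$ have small $d$ directly (the factor structure forces the divisors to essentially lie inside $\{0, m/2, m\}$). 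An adaptation of the proof of Theorem \ref{thm:FB} then yields that distinct divisors of $A$ produce disjoint image sets of divisors of $[m]\setminus\{1\}$, whence $d(A)\leq d([m]\setminus\{1\})$.

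In the case $1\in A$, the promotion argument is unavailable, since divisors of $A$ may themselves contain $1$ while divisors of $[m]\setminus\{1\}$ cannot. Here I would exploit that $A\neq [m]$ forces some $j\in[2,m-1]$ to be absent from $A$: this missing element imposes forbidden-sum conditions on every factorization $A = B+C$ (for instance $1\in B$ implies $j-1\notin C$), drastically limiting the admissible divisors. I would encode each divisor of $A$ as a restricted composition subject to these forbidden-sum constraints, using the composition machinery developed in Section \ref{s:counting} for the proof of Theorem \ref{crleven}, and argue that the number of such constrained compositions is at most $d([m]\setminus\{1\})$.

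The main obstacle is the case $1\in A$: the natural promotion is unavailable, and the bound has to come from the finer composition-counting estimates of Section \ref{s:counting}. In particular, showing that the forbidden-sum constraint from the missing $j\in[2,m-1]$ reduces the divisor count by precisely enough to achieve $d(A)\leq d([m]\setminus\{1\})$, together with a clean justification of why $m = 4$ is the unique exception beyond the base cases, is the most delicate step of the proof.
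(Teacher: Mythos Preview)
The paper does not prove Conjecture~\ref{conjodd2} (Part~II); it is explicitly left open in \S\ref{s:conclusion}, where the author identifies precisely the obstruction you run into---promotion targeted at $[m]\setminus\{1\}$ rather than $[m]$ does not extend straightforwardly---and poses as open questions whether the promotion procedure can be generalized and whether a composition-style bijection exists for divisors of sets other than $[k]$. So there is no proof in the paper to compare your attempt against.

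As a standalone argument, your proposal is a strategy outline with substantial gaps. In the case $1\notin A$, the modified promotion you sketch (appending $s$ rather than $s-\max B$ when $s=\max B+1$) is plausible and does handle the specific example the paper raises in \S\ref{s:conclusion}; but you have not carried out the analogue of Theorem~\ref{thm:FB} (disjointness of the image sets $F(B)$), and the case analysis there is already delicate for the target $[k]$. Your treatment of the ``degenerate configurations'' with $\max B=\max C=m/2$ is only asserted, not argued.

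The case $1\in A$ is where the real work lies, and here the proposal is essentially empty: you observe that a missing $j\in[2,m-1]$ imposes a forbidden-sum constraint on factorizations, and then say you would ``encode each divisor as a restricted composition'' and ``argue that the number of such constrained compositions is at most $d([m]\setminus\{1\})$,'' without indicating how. No such bijection or bound is developed in \S\ref{s:counting}; that section only treats divisors of the full intervals $[k]$ via headstrong compositions. The paper singles out exactly this difficulty as the open content of the conjecture, so what you have written is closer to a restatement of the problem than a proof.
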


The sequence $d_2(\base{2}{1\ldots 1})$ in particular appears to count many phenomena, which are documented in \cite{sloane} (see also \cite{bijection} for explicit bijections between some of the interpretations). Le Brun et al.~count $d_2(\base{2}{1\ldots 1})$ by exhibiting a generating function for the sequence; based on an argument originally due to Richard Schroeppel.

We now prove that the sumset operation corresponds to base $2$ lunar multiplication, facilitating the study of divisibility properties.

Let $\mcal{F}$ denote the collection of finite subsets of $\nats$, and let $\mcal{B}$ denote the set of finite binary sequences. There is a natural bijection $\beta: \mcal{F} \to \mcal{B}$. First, $\beta(\emptyset) = 0$. Next, for any nonempty $A \in \mcal{F}$ we define the binary number $\beta(A) = c_{\max(A)}\ldots c_1c_0|_{2}$ as follows: for $0 \leq i \leq \max(A)$:
\[c_i = \begin{cases}
	1 & \mbox{ if } i \in A,\\
	0 & \mbox{ if } i \notin A.
\end{cases}\]

The key observation is
\begin{equation}
\label{key}
	\beta(A+B) = \beta(A)\lm\beta(B).
\end{equation}

This is easy to see when viewing $A+B$ as $\Union_{b \in B} A+ \{b\}$. Thus, for each element $b$ of $B$, we are shifting every element of $A$ by $b$.

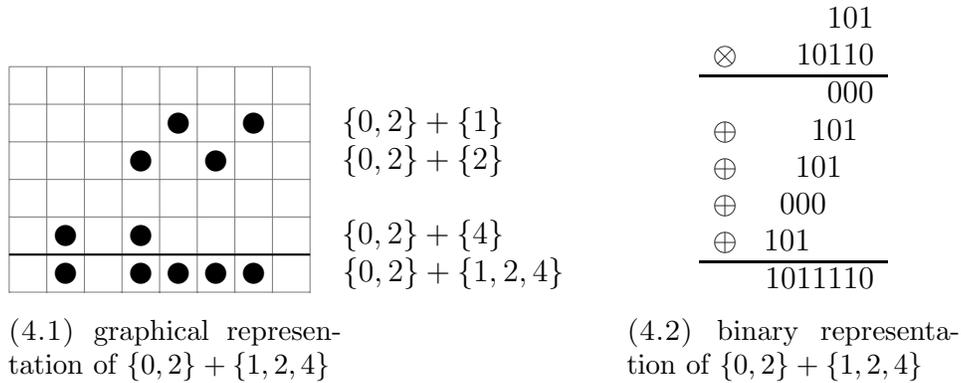
\begin{figure}[!h]
    \begin{subfigure}[b]{0.35\linewidth}
        \centering
        	\begin{tikzpicture}
				\draw[step=0.5cm,color=gray] (-5,1.99) grid (-1,5);
				\node at (-1.75, +4.25) {$\CIRCLE$};
				\node at (-2.75, +4.25) {$\CIRCLE$};
				\node at (+0.5, +4.25) {$\{0, 2\} + \{1\}$};
				\node at (-2.25, +3.75) {$\CIRCLE$};
				\node at (-3.25, +3.75) {$\CIRCLE$};
				\node at (+0.5, +3.75) {$\{0, 2\}+\{2\}$};
				\node at (-3.25, +2.75) {$\CIRCLE$};
				\node at (-4.25, +2.75) {$\CIRCLE$};
				\node at (+0.5, +2.75) {$\{0, 2\}+\{4\}$};
				\draw[thick] (-5,+2.5) -- (-1,+2.5);
				\node at (-1.75, +2.25) {$\CIRCLE$};
				\node at (-2.25, +2.25) {$\CIRCLE$};
				\node at (-2.75, +2.25) {$\CIRCLE$};
				\node at (-3.25, +2.25) {$\CIRCLE$};
				\node at (-4.25, +2.25) {$\CIRCLE$};
				\node at (+0.9, +2.25) {$\{0, 2\} + \{1, 2, 4\}$};
			\end{tikzpicture}
        \caption{graphical representation of $\{0, 2\}+\{1, 2, 4\}$}
        \label{fig:graphsumsets}
    \end{subfigure}\hfill
    \begin{subfigure}[b]{0.35\linewidth}
        \centering
    	\begin{tabular}{lr}
		& $101$ \\
		$\lm$ & $10110$ \\ \hline
		& $000$ \\
		$\lp$ & $101$\stp \\
		$\lp$ & $101$\stp\stp \\
		$\lp$ & $000$\stp\stp\stp\\
		$\lp$ & $101$\stp\stp\stp\stp \\ \hline
		&$1011110$
		\end{tabular} 
        \caption{binary representation of $\{0, 2\}+\{1, 2, 4\}$}
        \label{fig:binsumsets}
    \end{subfigure}
    \caption{\small Two representations of sumsets}\label{fig:sumsetrep}
\end{figure}

\begin{thm}
	$\beta: \mcal{F} \to \mcal{B}$ is a monoid-homomorphism, where $\mcal{F}$ is equipped with the sumset operation, and $\mcal{B}$ with the lunar multiplication operation.
\end{thm}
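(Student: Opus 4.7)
The plan is to verify the two requirements of a monoid homomorphism: preservation of the identity and preservation of the binary operation. The monoid identity on the $\mcal{F}$ side is $\{0\}$, since $A + \{0\} = A$ for every $A \in \mcal{F}$, while the identity for $\lm$ on $\mcal{B}$ is the single-digit string $1$. These match under $\beta$: $\beta(\{0\}) = 1$. So the whole content of the theorem is the identity (\ref{key}), namely $\beta(A+B) = \beta(A) \lm \beta(B)$.

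To prove (\ref{key}) I would exploit two elementary compatibilities of $\beta$ with set operations and then invoke distributivity of $\lm$ over $\lp$, which LeBrun et al.\ have already established. First, I would note that the digitwise-max nature of $\lp$ means that for any finite $A, B \subseteq \nats$,
\[\beta(A \cup B) = \beta(A) \lp \beta(B),\]
because $\beta$ is nothing but the indicator sequence of the set, and digitwise max of two indicator sequences is the indicator sequence of the union. Second, for a singleton $\{b\}$, the set $A + \{b\}$ is just $A$ shifted by $b$, and under $\beta$ this corresponds to appending $b$ zeros on the right; this is exactly what lunar multiplication by $\beta(\{b\}) = 1\underbrace{0\cdots 0}_{b}$ does, since at every stage a single non-zero digit times a digit of $\beta(A)$ returns that digit, and the resulting summand is simply shifted. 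Hence
\[\beta(A + \{b\}) = \beta(A) \lm \beta(\{b\}).\]

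Combining these, I would write $A + B = \bigcup_{b \in B}(A + \{b\})$ and compute, using the two compatibilities above together with the distributivity (and commutativity/associativity) of $\lm$ over $\lp$:
\[\beta(A+B) \;=\; \beta\Bigl(\bigcup_{b \in B}(A+\{b\})\Bigr) \;=\; \bigoplus_{b \in B} \beta(A+\{b\}) \;=\; \bigoplus_{b \in B} \bigl(\beta(A)\lm\beta(\{b\})\bigr) \;=\; \beta(A) \lm \Bigl(\bigoplus_{b \in B}\beta(\{b\})\Bigr) \;=\; \beta(A) \lm \beta(B),\]
where the last equality uses $B = \bigcup_{b\in B}\{b\}$ and the compatibility of $\beta$ with unions.

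The only step that needs genuine care, and hence is the main (mild) obstacle, is verifying the singleton shift identity $\beta(A + \{b\}) = \beta(A) \lm \beta(\{b\})$ from the definition of lunar long multiplication: one has to check that since the multiplier $\beta(\{b\})$ has a single non-zero digit, every row of the long multiplication consists only of copies of digits from $\beta(A)$ (no $\min$ operation ever lowers a $1$ to $0$), so the result really is $\beta(A)$ shifted by $b$ positions, matching the set-theoretic shift. Once this is in hand, everything else is formal and the theorem follows.
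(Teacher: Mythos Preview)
Your proof is correct but takes a genuinely different route from the paper's. The paper proceeds by a direct digit-by-digit comparison: writing $\beta(A)\lm\beta(B)$ as a polynomial with coefficients $c_i = \max_{j+k=i}\min(a_j,b_k)$, it observes that in base~$2$ this equals $1$ exactly when some $j \in A$ and $k \in B$ satisfy $j+k=i$, which is precisely the condition $i \in A+B$. Your argument is more structural: you isolate two atomic compatibilities---$\beta$ sends unions to $\lp$, and $\beta$ sends the singleton shift $A+\{b\}$ to lunar multiplication by $\beta(\{b\})$---and then derive the general case from $A+B=\bigcup_{b\in B}(A+\{b\})$ via distributivity of $\lm$ over $\lp$. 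This is in fact the informal intuition the paper sketches just before the theorem (and in Figure~\ref{fig:sumsetrep}) but does not use in its formal proof. Your route makes the algebraic structure transparent and pinpoints exactly which semiring axioms are doing the work; the paper's direct verification is self-contained and does not rely on distributivity as an external input. One small point: your chain of equalities tacitly handles $B=\emptyset$ via the convention that an empty $\bigoplus$ is $0$; the paper treats this case separately, and it would do no harm to mention it explicitly.
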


\begin{proof}
	The fact that $(\mcal{F}, +)$ is a commutative monoid follows from the fact that $(\nats, +)$ is. The fact that $(\mcal{B}, \lm)$ is a monoid is proved in Le Brun et al.~In fact, they show that for any base $m\geq 2$, if $\mcal{A} = \{0, 1, 2, \ldots, b-1\}$ the set of polynomials (the standard representation for a positional counting system) equipped with lunar addition and lunar multiplication $(\mcal{A}[X], \lp, \lm)$ is a commutative semiring.

	We have $\beta(\{0\}) = 1$, so that the neutral elements are mapped to each other. Let $A, B \in \mcal{F}$. We need to prove formula \eqref{key}. The result is clear when one of $A, B$ is $\emptyset$. Assume therefore that $A, B \neq \emptyset$. Identifying the corresponding binary numbers $\beta(A) = a_{\max(A)}\ldots a_1a_0|_{2}$ and $\beta(B) = b_{\max(B)}\ldots b_1b_0|_{2}$ with the polynomials $P = \sum_{i=0}^{\max(A)}a_i2^i$ and $Q = \sum_{i=0}^{\max(B)}b_i2^i$ we have
	\[\beta(A) \lm \beta(B) = \sum_{i=0}^{\max(A)+\max(B)}c_i2^i\]
	where
	\[c_i = \bigoplus_{j+k=i}a_j\lm b_k = \max\st{\min\{a_j, b_k\}}{j+k=i}.\]
	Thus, $c_i = 1$ if and only if there exist $j, k \in \nats$ with $j+k=i$ such that $a_j=1$ and $b_k = 1$. On the other hand, by the definition of $\beta$, the $i$-th digit of $\beta(A+B)$ is $1$ if and only if $i \in A+B$. That is, if and only if there exist $j, k \in \nats$ such that $j \in A$ and $k \in B$ and $j+k = i$. Again, the definition of $\beta$ implies that these two conditions are the same so that $\beta(A+B) = \beta(A)\otimes\beta(B)$.
\end{proof}

This homomorphism identifies $0$-rooted sets and odd binary numbers so that Conjecture \ref{conjodd} (Part I) is an immediate corollary of Theorem \ref{crlodd} above:

\begin{crl}
	In base 2, among all \emph{odd} $k$-digit numbers $n$, $d_2(n)$ has a unique maximum at $n = 2^k-1 = \base{2}{111\ldots 111}$.
\end{crl}
\section{Counting $d([k])$}
\label{s:counting}
In Section \ref{s:nonnormal} we find the maximum of $d(\cdot)$ among all subsets of $[k]$, not just the $0$-rooted ones.  One important part of the proof is the observation (already made by LeBrun et al.~in \cite{lunar}) that $d([k]\setminus \{0\}) = 2d([k-1])$. We then show that $2d([k-1])>d([k])$. The purpose of this section is to help us establish this inequality by highlighting the connection between the sequence $d([k])$ and Fibonacci numbers of higher-order.

Recall that a \demph{composition} of a natural number $n \in \nats$ is an ordered tuple of \emph{positive} natural numbers $(c_1, c_2, \ldots, c_k)$ such that $n = \sum_{i=1}^k c_i$. If the length of the tuple is $k$, it is called a $k$-composition. Each of the $c_i$ is called a \demph{part} of the composition. It is an easy exercise to show that the total number of compositions of $n$ is $2^{n-1}$. Placing different restrictions on such compositions leads to a rich theory. For example, one may restrict the length of a composition, the size of the parts, the type of the parts, the arrangement of the parts, etc. In particular, integer \demph{partitions} are integer compositions arranged in a non-decreasing order. Other types of restriction have to do with so-called ``statistics'': If $(c_1, c_2, \ldots, c_k)$ is a composition of $n$, we say that a \demph{rise} occurs at position $i$ if $c_{i+1}>c_i$, a \demph{fall} is defined analogously, and a \demph{level} occurs when $c_{i+1}=c_i$. \demph{Statistics} in the context of composition have to do with the number of such rises and falls. Another type of restriction demands that certain patterns be avoided. MacMahon \cite{Mac} was amongst the firsts to study such questions in detail, and we refer the reader to the recent book of Heubach and Mansour \cite{heubach} for an excellent survey of the field.

LeBrun conjectured that $d_2(\base{2}{111\ldots 111})$ (with $k$ 1's) counts the number of compositions of $k$ with the added restriction that the first part is greater or equal to all other parts. For example, $d([3]) = d_2(\base{2}{1111})=5$ corresponding to the five restricted compositions $(4)$; $(3,1)$; $(2,2)$; $(2, 1, 1)$; $(1, 1, 1, 1)$. For ease of reference, we shall call such restricted compositions \demph{headstrong compositions}.

This conjecture was proved by Richard Schroeppel (\cite{lunar}, Theorem 16) and again by Frosini and Rinaldi in \cite{bijection}. For the sake of completion, we reproduce it below in the language of sumsets. 

\begin{thm}[Schroeppel, 2001]
	For any $n \in \nats$, the number $d_2([n])$ equals the number of headstrong compositions of $n+1$.
\end{thm}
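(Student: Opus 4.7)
By the correspondence established in Section \ref{s:lunar}, $d_2([n])$ counts the number of sumset divisors of the $0$-rooted set $[n] = \{0, 1, \ldots, n\}$, and the plan is to count these directly via an explicit bijection with headstrong compositions of $n + 1$.

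The cornerstone of the argument is the observation that, for a candidate $B = \{0 = b_0 < b_1 < \cdots < b_r\}$, one only ever needs to test a single complement: the canonical choice $C^{\star} := [n - b_r] = \{0, 1, \ldots, n - b_r\}$. Indeed, if $B + C = [n]$ for some $C$, then each $c \in C$ satisfies $b_r + c \leq n$, forcing $C \subseteq C^{\star}$ and hence $B + C^{\star} \supseteq B + C = [n]$; the reverse inclusion follows from $\max(B + C^{\star}) = n$. So $B$ is a divisor of $[n]$ if and only if $B + [n - b_r] = [n]$.

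Next, I translate this into a condition on the gap sequence $g_i := b_i - b_{i-1}$ of $B$. Setting $m = n - b_r$, the sumset $B + [m]$ is the union of the intervals $\{b_i, b_i + 1, \ldots, b_i + m\}$ for $i = 0, 1, \ldots, r$; this union fills $\{0, 1, \ldots, b_r + m\} = [n]$ exactly when consecutive intervals are contiguous, i.e., when $g_i \leq m + 1 = n - b_r + 1$ for every $i$. Consequently,
\[ B \text{ is a divisor of } [n] \iff \max_{1 \leq i \leq r} g_i \leq n - b_r + 1. \]

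With this characterization the bijection essentially writes itself. Send each divisor $B$ of $[n]$, with gap sequence $(g_1, \ldots, g_r)$ (the empty tuple when $B = \{0\}$), to
\[ \phi(B) := (n + 1 - b_r,\; g_1,\; g_2,\; \ldots,\; g_r). \]
The parts sum to $(n + 1 - b_r) + b_r = n + 1$, and the characterization above guarantees that the first part dominates every subsequent one, so $\phi(B)$ is a headstrong composition of $n + 1$. Conversely, from any headstrong composition $(c_1, \ldots, c_k)$ of $n + 1$ one builds $B := \{0,\, c_2,\, c_2 + c_3,\, \ldots,\, c_2 + \cdots + c_k\}$, whose maximum is $n + 1 - c_1$ and whose gap sequence $(c_2, \ldots, c_k)$ has every entry bounded by $c_1$; the characterization then certifies that $B$ divides $[n]$. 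The two maps are visibly mutually inverse.

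The only mildly subtle step is the cornerstone observation; once the existential quantifier ``there exists $C$'' collapses to a concrete gap inequality on $B$ alone, the remainder is straightforward bookkeeping.
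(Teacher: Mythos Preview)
Your proof is correct and follows essentially the same route as the paper's: both arguments hinge on the observation that any divisor $B$ of $[n]$ already satisfies $B + [n-\max(B)] = [n]$, reduce the divisor condition to a bound on the consecutive gaps of $B$, and then read off the bijection with headstrong compositions of $n+1$ from the gap sequence together with the leading part $n+1-\max(B)$. The only cosmetic difference is that the paper lists the gaps in the reverse order $(a_k-a_{k-1},\ldots,a_1-a_0)$, whereas you use $(g_1,\ldots,g_r)$; since the headstrong condition constrains only the first part, this reversal of the tail is immaterial.
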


\begin{proof} 
	Suppose $n+1 = c_1+c_2+\cdots+c_k$ is a headstrong composition. Then we let
	\begin{align*}
		A &\eqdef \{(n+1)-c_1, (n+1)-(c_1+c_2), (n+1)-(c_1+c_2+c_3), \ldots, 0\},\\
		B &\eqdef [c_1-1]
	\end{align*}
	It is clear that $A+B = [n]$. Conversely, suppose $A+B = [n]$, then also $A+[\max(B)] = [n]$. Suppose $A = \{0=a_0, a_1, \ldots, a_k=\max(A)\}$ with $0 <a_1<\cdots <a_k$. Then,
	\[n+1 = (\max(B)+1) + (a_k-a_{k-1})+(a_{k-1}-a_{k-2}) + \cdots + (a_2-a_1)+(a_1-a_0)\]
	is a headstrong composition of $n+1$, and applying the procedure above to this composition will give us $A$. To see this is indeed a headstrong composition, note that $a+{j+1}-1 \in [n] = A+B$ and let $a_{j+1}-1 = a+b$ for some $a \in A$ and $b \in B$. Since $a_j\leq a_{j+1}-1$ we have $b' = a_{j+1}-1-a_j \geq 0$. On the other hand, $a< a_{j+1}$ implies $a\leq a_j$ so that $b' \leq b \leq \max(B)$. Since $a_j+b'+1 = a_{j+1}$ we must have $a_j+\max(B)+1 \geq a_{j+1}$, that is $\max(B)+1 \geq a_{j+1}-a_j$.
\end{proof}

The bijection in the proof in facts shows that the following corollary holds. The next section studies this further, as it will play an important role in the proof of Conjecture \ref{conjbases}.

\begin{crl}
\label{crl:parts}
	The number of divisors of $[n]$ whose cardinality is exactly $m$ equals the number of headstrong compositions of $n+1$ with exactly $m$ parts.
\end{crl}

Headstrong compositions were first studied by Knopfmacher and Robbins \cite{knopf} who derived generating functions and asymptotics for them. The (ordinary) generating function is given by
\[\sum_{\ell=1}^{\infty}\frac{(1-z)z^\ell}{1-2z+z^{\ell+1}}.\]
The index $\ell$ corresponds to the leading term of the composition. By comparing generating functions it is easy to see that the number of headstrong composition of $n$ starting with $k$ is given by $F(k, n)$, the $n$-th element of the \demph{generalized Fibonacci sequence} $F(k, \cdot)$ defined by the recurrence relation

\begin{align*}
	F(n, k) = \begin{cases}
		0 & \mbox{ if } 1\leq k<n,\\
		1 & \mbox{ if } k = n,\\
		\sum_{j=1}^n F(n, k-j) & \mbox{ if } k> n.
	\end{cases}
\end{align*}

It is traditional to start enumerating the Fibonacci sequence at $0$ (that is, $F_0=0$, $F_1 = 1$ etc). However, we break from this tradition and start our count at $1$ (that is, $F_1 = 0$, $F_2 = 1$), which gives our formula in the proposition below a pleasing symmetry. Thus, $F(2, \cdot)$ are the usual Fibonacci numbers, $F(3, \cdot)$ are the so-called Tribonacci numbers, etc. Note that the sequence $F(1, \cdot)$ is simply the constant sequence $1, 1, 1\ldots$

\begin{table}[!h]
\begin{adjustbox}{width=\columnwidth,center}
\begin{tabular}{l|cccccccccc}
$F(n, k)$& $k=1$ & $k=2$ & $k=3$ & $k=4$ & $k=5$ & $k=6$ & $k=7$ & $k=8$ & $k=9$ & $k=10$ \\ \hline
$n=1$ & $1$ & $1$ & $1$ & $1$ & $1$ & $1$ & $1$ & $1$ & $1$ & $1$ \\
$n=2$ & $0$ & $1$ & $1$ & $2$ & $3$ & $5$ & $8$ & $13$ & $21$ & $34$ \\
$n=3$ & $0$ & $0$ & $1$ & $1$ & $2$ & $4$ & $7$ & $13$ & $24$ & $44$ \\
$n=4$ & $0$ & $0$ & $0$ & $1$ & $1$ & $2$ & $4$ & $8$ & $15$ & $29$\\
$n=5$ & $0$ & $0$ & $0$ & $0$ & $1$ & $1$ & $2$ & $4$ & $8$ & $16$\\
\end{tabular}
\end{adjustbox}
\caption{\small Table of $F(n, k)$}
\end{table}

To get $d_2(\base{2}{111\ldots 111})$ (with $k$ 1's) one simply sums the $k$-th column in the table above. (For a generating-functions-free proof, observe that $F(n,\cdot)$ enumerates the number of compositions with leading term $n$ by the following reasoning. Each composition of $m$ with leading term $n$, can be obtained by appending $1$ to the end of a composition of $m-1$ (with leading term $n$); or by appending $2$ to the end of a composition of $m-2$ (with leading term $n$); and so on until we append $n$ to the end of a composition of $m-n$. We are not double-counting since the compositions differ in their last term; and we can prove inductively that all compositions of $m$ (with leading term $n$) are obtained in that way.) Thus, the number of headstrong compositions of $k$ is given by $\sum_{n=1}^{k}F(n,k) = \sum_{n\geq 1}F(n, k)$.

As mentioned at the beginning of this section, this characterisation simplifies the proof that $2d([k-1])>d([k])$, as a consequence of the following lemma.

\begin{lemma}
\label{fibdouble}
	For $k \geq n$ we have $2F(n, k)\geq F(n, k+1)$, and the inequality is strict for $k\geq 2n$.
\end{lemma}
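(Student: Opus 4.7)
The plan is to derive the exact identity
\[
2F(n,k) \;=\; F(n,k+1) + F(n,k-n) \qquad (k \geq n+1)
\]
directly from the recurrence, and then read off both the weak and strict inequalities from the non-negativity (resp.\ positivity) of the correction term $F(n,k-n)$.

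First I would expand $F(n,k+1)$ using the defining recurrence (which applies because $k+1 > n$):
\[
F(n,k+1) \;=\; \sum_{j=1}^{n} F(n,k+1-j) \;=\; F(n,k) + \sum_{j=2}^{n} F(n,k+1-j).
\]
Re-indexing the tail sum by $i = j-1$ turns it into $\sum_{i=1}^{n-1} F(n,k-i)$, which is exactly the recurrence expansion of $F(n,k)$ with the last summand $F(n,k-n)$ removed. Using the recurrence $F(n,k) = \sum_{j=1}^{n} F(n,k-j)$ (valid since $k > n$), this tail sum equals $F(n,k) - F(n,k-n)$. Substituting back gives $F(n,k+1) = 2F(n,k) - F(n,k-n)$, which is the claimed identity.

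Next I would handle the single boundary case $k = n$ separately, where the recurrence for $F(n,k)$ itself does not apply: here $F(n,n) = 1$ and $F(n,n+1) = \sum_{j=1}^{n} F(n,n+1-j) = F(n,n) + 0 + \cdots + 0 = 1$, so $2F(n,n) = 2 \geq 1 = F(n,n+1)$ trivially.

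Finally, since all values $F(n,\cdot)$ are non-negative integers, the identity immediately yields $2F(n,k) \geq F(n,k+1)$ for every $k \geq n+1$, and combined with the boundary case this covers all $k \geq n$. Strictness becomes the statement $F(n,k-n) > 0$; by the recurrence $F(n,m) = 0$ for $m < n$ while $F(n,n) = 1$ and $F(n,m)$ is then non-decreasing (indeed strictly positive for all $m \geq n$, as each $F(n,m)$ with $m > n$ is a sum including the positive term $F(n,m-1)$ once we are past the first positive value). Hence $F(n,k-n) > 0$ precisely when $k-n \geq n$, i.e.\ $k \geq 2n$, giving strict inequality in that range. There is no real obstacle; the only care needed is separating the $k = n$ boundary from the range $k \geq n+1$ where the recurrence for $F(n,k)$ itself can be invoked.
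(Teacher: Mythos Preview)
Your proposal is correct and follows essentially the same approach as the paper: both handle the boundary case $k=n$ separately, then for $k>n$ derive the identity $F(n,k+1)=2F(n,k)-F(n,k-n)$ by splitting off the first term of the recurrence for $F(n,k+1)$ and recognizing the remainder as $F(n,k)-F(n,k-n)$, and finally read off the (strict) inequality from the sign of $F(n,k-n)$. You give slightly more justification for the positivity of $F(n,k-n)$ when $k\geq 2n$, but the argument is the same.
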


\begin{proof}
	 Since $F(n, n) = F(n, n+1) = 1$, the claim is clearly true for $k=n$. Since the claim holds trivially for $n=1$, we may assume that $n \geq 2$. For $k>n$ we have from the definition of $F(n, k)$ 
	\begin{align*}
		F(n, k+1) &= \sum_{j=1}^{n}F(n, k+1-j)\\
		&=F(n, k)+\sum_{j=1}^{n-1}F(n, k-j)\\
		&=2F(n, k)-F(n, k-n) \leq 2F(n, k)
	\end{align*}
	with strict inequality for $k \geq 2n$.
\end{proof}

Lemma \ref{fibdouble} gives a lower bound on $F(n, k)$ in terms of the next element of the sequence. The next lemma gives an upper bound showing that
\[\frac{1}{2}F(n, k+1)\leq F(n, k) \leq \frac{2}{3} F(n, k+1).\]
This will be used to show that $[k]\setminus \{0\}$ is $d(\cdot)$-maximal among all $\pset{[k]}\setminus \mcal{Z}_{\leq k}$. These bounds are not asymptotically tight: the reader may recall that $F(2, k)/F(2, k+1) \to 1/\phi$ where $\phi = (1+\sqrt{5})/2 \approx 1.618$. By considering the characteristic equation of the linear recursion, one finds that $F(n, k)/F(n, k+1) \to 1/r$, where $r$ is the largest real root of $x^n(2-x)=1$ (see \cite{mathworld}), though we shall not use this result. 

\begin{lemma}
\label{fibbound}
	For any $n>1$ and $k > n$ we have $3F(n, k) \leq 2F(n, k+1)$. Equality holds if and only if $n=2$ and $k=4$.
\end{lemma}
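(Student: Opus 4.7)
The plan is to apply the defining recurrence to both sides of the claimed inequality and reduce it to a simpler statement. Since $k > n$, the recurrence applies to both $F(n,k)$ and $F(n,k+1)$. Writing
\[2F(n,k+1) \;=\; 2F(n,k) \;+\; 2\sum_{j=2}^{n} F(n, k+1-j),\]
the inequality $3F(n,k) \leq 2F(n,k+1)$ is equivalent to
\[F(n,k) \;\leq\; 2\bigl[F(n,k-1) + F(n,k-2) + \cdots + F(n,k-n+1)\bigr].\]
Now substitute $F(n,k) = F(n,k-1) + F(n,k-2) + \cdots + F(n,k-n)$ on the left and cancel the repeated $F(n, k-j)$ terms on both sides. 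The whole claim reduces to
\[(\ast) \qquad F(n,k-n) \;\leq\; F(n,k-1) + F(n,k-2) + \cdots + F(n,k-n+1).\]

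Before analyzing $(\ast)$ I would record the simple fact that $F(n,\cdot)$ is non-decreasing. From the recurrence, $F(n,m+1) - F(n,m) = \sum_{j=2}^n F(n, m+1-j) \geq 0$ for $m \geq n$, and the initial segment of the sequence is $0, 0, \ldots, 0, 1, 1, 2, \ldots$, which is clearly non-decreasing. With this in hand I split into two cases. If $n \geq 3$, the right-hand side of $(\ast)$ has $n-1 \geq 2$ terms, each at least $F(n,k-n)$ by monotonicity (the smallest index is $k-n+1 > k-n$). When $F(n,k-n) \geq 1$, the sum is at least $2F(n,k-n) > F(n,k-n)$; when $F(n,k-n) = 0$ (which happens exactly for $n < k < 2n$), the sum still contains $F(n,k-1) \geq 1$ since $k-1 \geq n$. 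Either subcase gives strict inequality, so no equality can arise for $n \geq 3$.

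The case $n=2$ is where the equality case must appear. Here $(\ast)$ collapses to the single comparison $F(2,k-2) \leq F(2,k-1)$. With the indexing $F(2,1)=0$, $F(2,2)=F(2,3)=1$, $F(2,m+1) > F(2,m)$ for $m \geq 3$, this holds strictly unless $(k-2,k-1) = (2,3)$, i.e.\ $k=4$, yielding the unique equality case. The only real obstacle I anticipate is being careful with the small-index boundary: making sure the recurrence expansions are legal (which needs $k > n$, as hypothesized) and confirming that the single flat spot $F(2,2)=F(2,3)$ in the Fibonacci sequence is genuinely the sole source of equality.
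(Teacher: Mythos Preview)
Your proof is correct. Both your argument and the paper's expand $F(n,k+1)$ via the defining recurrence, but you push the algebraic cancellation one step further to the reduced inequality $(\ast)$, which then follows directly from the monotonicity of $F(n,\cdot)$. The paper instead stops the expansion earlier and invokes the preceding Lemma~\ref{fibdouble} (the bound $2F(n,k) \geq F(n,k+1)$) to control individual terms, treating $n=2$ and $n\geq 3$ with separate small-case checks. Your route is self-contained---it does not appeal to Lemma~\ref{fibdouble}---and makes the equality case $n=2$, $k=4$ emerge transparently as the unique flat spot $F(2,2)=F(2,3)$ of the Fibonacci sequence; the paper's route has the advantage of reusing machinery already established.
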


\begin{proof}
	We have $F(2, 3) = 1$, $F(2, 4) = 2$, $F(2, 5) = 3$, which proves the claim for $n=2$ and $k \leq 4$.

	If $n=2$ and $k > 4$, then $k-1\geq 2n$ so Lemma \ref{fibdouble} shows that
	\begin{align*}
		2F(n, k+1) = 2F(n, k)+2F(n, k-1) > 3F(n, k).
	\end{align*}

	On the other hand, if $n\geq 3$ and $k=n+1$ we have
	\begin{align*}
		2F(n, n+2) = 4 > 3 = 3F(n, n+1)
	\end{align*}
	while if $k\geq n+2$ we have by Lemma \ref{fibdouble}
	\begin{align*}
		2F(n, k+1) &\geq 2F(n, k) + 2F(n, k-1) + 2F(n, k-2)\\
		 &\geq 3F(n, k)+2F(n, n)\\
		 &> 3F(n, k).
	\end{align*}
\end{proof}

\section{Divisors of non-$0$-rooted sets}
\label{s:nonnormal}
Most of the groundwork for Theorem \ref{crleven} is now done. The remaining key observation is Lemma 15 of \cite{lunar}. For completeness, Lemma \ref{L15} proves the relevant part\footnote{Lemma 15 applies to arbitrary bases: ``If the base $b$ expansion of $n$ ends with exactly $r\geq 0$ zeroes, so that $n=mb^r$ with $b\!\not|m$, then $d_b(n) = (r+1)d_b(m)$.''} in the language of sumsets.

For convenience, we introduce the notation $[k+] = [k]\setminus \{0\} = \{1, 2, 3, \ldots, k\}$. For $1\leq k \in \nats$ we also set $\mathcal{Z}^+_{k}$ the collection of sets of positive natural numbers whose maximal element is $k$:
\[\mcal{Z}^+_k \eqdef \st{A \subseteq \nats}{\min(A)>0,\,\max(A) = k}.\]
Finally, we have $\mcal{Z}^+_{\leq k} = \Union_{0<\ell\leq k}\mcal{Z}^+_\ell$. Note that we have a partition of subsets of $[k]$: $\pset{[k]} = \{\emptyset\} \sqcup \mcal{Z}_{\leq k} \sqcup \mcal{Z}^+_{\leq k}$ (disjoint union).

\begin{lemma}[LeBrun et al.]
\label{L15}
	Let $A$ be a finite subset of $\nats$, and let $r \eqdef \min(A)$. Then,
	\[d(A) = (r+1)d(A-\{r\}).\]
\end{lemma}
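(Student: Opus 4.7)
The plan is to establish an explicit $(r+1)$-to-$1$ correspondence between divisors of $A$ and divisors of the $0$-rooted set $A_0 \eqdef A - \{r\}$. The key observation driving this is that for any Minkowski sum $A = B + C$ of finite sets of naturals, $\min(A) = \min(B) + \min(C)$; so a divisor $B$ of $A$ is forced to have $\min(B) \in \{0, 1, \ldots, r\}$, with the complementary factor's minimum determined. The ``$(r+1)$'' in the formula will come entirely from recording which of these $r+1$ values $\min(B)$ takes.

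First I would set $A_0 = A - \{r\}$ and note that if $B_0$ divides $A_0$, then $B_0$ is automatically $0$-rooted: from $A_0 = B_0 + C_0$ we get $0 = \min(A_0) = \min(B_0) + \min(C_0)$ with both summands nonnegative, forcing $\min(B_0) = 0$. Next I would define the map
\[\Phi : \{\text{divisors of } A_0\} \times \{0, 1, \ldots, r\} \longrightarrow \{\text{divisors of } A\},\qquad \Phi(B_0, s) = B_0 + \{s\},\]
and verify $\Phi(B_0,s)$ does divide $A$: given any witness $A_0 = B_0 + C_0$, the equality $A = A_0 + \{r\} = (B_0 + \{s\}) + (C_0 + \{r-s\})$ produces an explicit complementary factor (well-defined because $s \leq r$).

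For injectivity, note that each $\Phi(B_0,s)$ has $\min = s$ and hence determines $s$, and then $B_0 = \Phi(B_0,s) - \{s\}$ is recovered. For surjectivity, take any divisor $B$ of $A$ with witness $A = B + C$: set $s = \min(B)$ (so $s \leq r$ by the minimum identity) and $B_0 = B - \{s\}$; then the translated decomposition $A_0 = B_0 + (C - \{r-s\})$ shows $B_0$ divides $A_0$, and $\Phi(B_0,s) = B$.

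I don't expect any serious obstacle here; the only thing to handle carefully is the bookkeeping with minima (ensuring $r - s \geq 0$ whenever we translate $C$ down, and checking that the shifted complementary factor $C - \{r-s\}$ lies in $\nats$, which follows from $\min(C) = r - s$). Once the bijection $\Phi$ is shown to be well-defined and two-sided, counting fibers yields $d(A) = (r+1)\,d(A_0) = (r+1)\,d(A - \{r\})$.
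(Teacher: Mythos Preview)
Your proposal is correct and follows essentially the same approach as the paper: the paper also exhibits the $(r+1)$-to-$1$ correspondence via the translation maps $B_0 \mapsto B_0 + \{k\}$ (for $0 \le k \le r$) in one direction and $F \mapsto F - \{\min F\}$ in the other, using the identity $\min(B)+\min(C)=\min(A)=r$. The only cosmetic difference is that the paper phrases the argument as two separate inequalities rather than packaging it as a single bijection $\Phi$.
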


\begin{proof}
	Note that $A-\{r\} \in \mcal{Z}$ is a $0$-rooted set. Suppose $B+C = A-\{r\}$. Then, for any $0\leq k \leq r$
	\[(B+\{k\})+(C+\{r-k\}) = (B+C)+\{r\} = (A-\{r\})+\{r\} = A\]
	shows that each of $B+\{k\}$ is a divisor of $A$. These are clearly all distinct, so $d(A)\geq (r+1)d(A-\{r\})$.

	Conversely, suppose $B+C = A$. Denoting $b \eqdef \min(B)$ and $c \eqdef \min(C)$, we have $b+c = \min(A) = r$. Thus,
	\[(B-\{b\})+(C-\{c\}) = (B+C)-\{r\} = A - \{r\}.\]
	This shows that $F \mapsto F-\min(F)$ maps divisors of $A$ to divisors of $A-\{r\}$. Since $0 \leq \min(F)\leq r$ (and $F \neq F'$ implies $F-\{k\} \neq F'-\{k\}$), each divisor of $A-\{r\}$ is the image of at most $(r+1)$ divisors of $A$. That is, $d(A) \leq (r+1)d(A-\{r\})$.
\end{proof}

As an immediate consequence of Lemma \ref{L15} we have $d([k+]) = 2d([k-1])$. The following Theorem then implies that $[k+]$ has more divisors than any element in $\mcal{Z}_{\leq k}$.

\begin{thm}
\label{propdoubling}
	$2d([k-1])\geq d([k])$, and the inequality is strict for $k>1$.
\end{thm}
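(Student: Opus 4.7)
My plan is to prove this by comparing the two quantities term by term using the Fibonacci-style expansion developed in the previous section.

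Using Schroeppel's theorem together with the identification $d([n]) = \sum_{m \geq 1} F(m, n+1)$, I would rewrite the inequality as
\[
2d([k-1]) - d([k]) \;=\; \sum_{n \geq 1} \bigl( 2F(n,k) - F(n, k+1) \bigr) \;\geq\; 0.
\]
Lemma \ref{fibdouble} tells us that $2F(n,k) \geq F(n,k+1)$ whenever $k \geq n$, so every summand with $2 \leq n \leq k-1$ is non-negative and can be safely discarded. The argument then reduces to controlling the three boundary indices $n = 1$, $n = k$, and $n = k+1$, together with noting that all terms with $n \geq k+2$ vanish.

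Directly from the definition of $F(n,\cdot)$, the three boundary contributions are
\[
2F(1,k) - F(1,k+1) = 1,\qquad 2F(k,k) - F(k,k+1) = 1,\qquad 2F(k+1,k) - F(k+1,k+1) = -1,
\]
(using $F(1,\cdot)\equiv 1$, $F(n,n)=F(n,n+1)=1$, and $F(k+1,k)=0$). For $k > 1$ these three indices are distinct, so the aggregate slack is at least $1+1-1 = 1 > 0$, yielding strict inequality. For $k = 1$ the indices $n = 1$ and $n = k$ coincide, leaving only $1 - 1 = 0$, which matches the direct computation $2d([0]) = 2 = d([1])$.

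I expect no substantive obstacle beyond the careful bookkeeping of which indices coincide when $k$ is small; the real content of the theorem is already packaged in Lemma \ref{fibdouble}. As a sanity check I would verify the values $d([2]) = 3$, $d([3]) = 5$ against the table of $F(n,k)$ to make sure the boundary accounting reproduces $2d([2]) - d([3]) = 1$ and $2d([1]) - d([2]) = 1$.
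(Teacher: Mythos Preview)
Your proposal is correct and follows essentially the same approach as the paper: both arguments write $2d([k-1])-d([k])$ as $\sum_{n}(2F(n,k)-F(n,k+1))$, invoke Lemma~\ref{fibdouble} for the interior terms, and finish by explicit evaluation at the boundary indices. The only cosmetic difference is that the paper first cancels the $n=1$ contribution against the extra $F(k+1,k+1)$ term and then points to $n=k$ for strict positivity, whereas you tally the three boundary terms $1+1-1$ directly; the content is identical.
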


\begin{proof}
	Note that $d([0]) = 1$ and $d([1])=2$, which verifies the claim for $k=1$. Assume therefore that $k\geq 2$. We have seen in Section \ref{s:counting} that
	\[d([k-1]) = \sum_{n=1}^{k}F(n, k).\]
	By Lemma \ref{fibdouble} we have $2F(n, k)\geq F(n,k+1)$ so that for $k>1$ we find
	\begin{align*}
		2d([k-1])-d([k]) &= \sum_{n=1}^{k}(2F(n, k)-F(n, k+1)) - F(k+1, k+1)
	\end{align*}
	Now, $F(1,\cdot)$ is the constant $1$ sequence, so $2F(1, k)-F(1, k+1) = 1$. Moreover, $F(k+1, k+1) = 1$ by definition. Thus,
	\begin{align*}
		2d([k-1])-d([k]) = \sum_{n=2}^{k}(2F(n, k)-F(n, k+1)).
	\end{align*}
	Since $k\geq 2$ the sum is nonempty, and Lemma \ref{fibdouble} shows that each term in the sum is nonnegative. In fact the sum includes the term $2F(k, k)-F(k, k+1) = 2-1$ so it is positive.
\end{proof}

To show that $[k+]$ is the maximum of $d(\cdot)$ in $\mcal{Z}^+_{\leq k}$, we use Lemma \ref{fibbound}.

\begin{thm}
\label{uppereven}
	For any $1\leq j\leq k$ we have $(j+1)d([k-j]) \leq 2d([k-1])$. Equality holds if and only if $j=1$, or $k=3$ and $j=2$.
\end{thm}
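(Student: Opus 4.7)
The case $j=1$ is trivial equality. For $j\geq 2$, my plan is to expand both sides using the Fibonacci identity $d([m]) = \sum_{n=1}^{m+1}F(n,m+1)$ from Section~\ref{s:counting}, settle the base case $j=2$ by a direct application of Lemma~\ref{fibbound}, and then extend to $j\geq 3$ by a short induction on $j$.

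For the base case $j=2$, one rewrites
\[3d([k-2]) - 2d([k-1]) \;=\; \sum_{n=1}^{k-1}\bigl[3F(n,k-1)-2F(n,k)\bigr] \;-\; 2F(k,k).\]
The boundary contributions at $n=1$ and (for $k\geq 3$) at $n=k-1$ are each $+1$, using $F(1,\cdot)\equiv 1$ and $F(n,n)=F(n,n+1)=1$; they cancel the closing $-2F(k,k)=-2$ exactly. For $2\leq n\leq k-2$, Lemma~\ref{fibbound} (applicable since $k-1>n$) makes each interior term nonpositive. Hence the difference reduces to $\sum_{n=2}^{k-2}\bigl[3F(n,k-1)-2F(n,k)\bigr]$, which is empty for $k\leq 3$ (giving equality exactly at $k=3$; the case $k=2$ is checked by hand, yielding $3-4=-1$), and is at most $-1$ for $k\geq 4$ since already $n=k-2$ contributes $3-4=-1$. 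This settles the base case together with its unique equality pattern.

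For $j\geq 3$, I induct on $j$. The inductive hypothesis applied with parameter $j-1$ and the same $k$ gives $j\,d([k-j+1])\leq 2d([k-1])$, so it suffices to prove the intermediate step $(j+1)d([k-j])\leq j\,d([k-j+1])$, equivalently $d([m+1])/d([m])\geq (j+1)/j$ with $m=k-j$. But the just-established $j=2$ case, specialized to $k=m+2$, is precisely $d([m+1])/d([m])\geq 3/2$, and $3/2 > (j+1)/j$ for every $j\geq 3$. The intermediate inequality is therefore strict, so the whole chain is strict and no equality occurs for $j\geq 3$.

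The main obstacle is the boundary bookkeeping in the $j=2$ case: one must observe that the $+1$ contribution at $n=1$, the $+1$ at $n=k-1$, and the $-2F(k,k)$ tail conspire to cancel exactly, reducing the estimate to the ``middle'' sum controlled directly by Lemma~\ref{fibbound}. Once this cancellation is spotted, the induction step collapses to a one-line comparison of rational numbers.
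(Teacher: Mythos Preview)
Your argument is correct and follows essentially the same route as the paper's proof: both establish the base case $j=2$ by peeling off the boundary terms $n=1$ and $n=k-1$ (which together contribute $+6$ on each side and cancel against the tail $2F(k,k)$), then invoke Lemma~\ref{fibbound} on the interior range $2\le n\le k-2$; both then chain the inequality $3d([m])\le 2d([m+1])$ to reach larger $j$. The only cosmetic differences are that you fold the small-$k$ checks into the $j=2$ analysis rather than dispatching $k\le 3$ up front, and you phrase the induction step as the ratio comparison $d([m+1])/d([m])\ge 3/2>(j+1)/j$ instead of the paper's additive split $(j+2)=(j-1)+3$.
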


\begin{proof}
	Since $d([0]), d([1]), d([2]) = 1, 2, 3$ respectively, it is easy to verify that the claim holds for $k\leq 3$. In particular, for $k=3$ and $j=2$ we have 
	\[3d([1]) = 3\cdot 2 = 2\cdot 3 = 2d([2]).\]
	Assume therefore that $k \geq 4$. The claim trivially holds with equality for $j=1$. We shall prove the strict inequality part of the claim by induction, with base case $j=2$. Recall that $F(1, \cdot) = F(n, n) = F(n, n+1) = 1$ (for any positive $n$). We have by Lemma \ref{fibbound}
	\begin{align*}
		3d([k-2]) &= 3\sum_{n=1}^{k-1}F(n, k-1) \\
		&= 2\cdot 3 + \sum_{n=2}^{k-2}3F(n, k-1)\\
		&< 2(F(1, k)+F(k-1, k)+F(k, k))+\sum_{n=2}^{k-2}2F(n, k)\\
		&= 2\sum_{n=1}^{k}F(n, k)
		= 2d([k-1]).
	\end{align*}
	(Note that the strict inequality is justified since the sum contains at least one element different from $F(2, 4)$.)

	Suppose that for some $j\geq 2$ we know that for all $k\geq \min(4, j)$ we have $(j+1)d([k-j])< 2d([k-1])$. Note that $3d([n-2]) \leq 2d([n-1])$ for any $n \geq 2$ (it is only when we require the inequality to be strict that we need $n \geq 4$). Thus, if $j+1\leq k$ we have
	\begin{align*}
		(j+2)d([k-j-1]) &= (j-1)d([k-j-1])+3d([k-j-1])\\
		&\leq (j-1)d([k-j])+2d([k-j])\\
		&= (j+1)d([k-j])\\
		&< 2d([k-1]).
	\end{align*}
\end{proof}

\mbox{}

\begin{thm}
\label{crlZ+}
	For any $\emptyset\neq A \subsetneq [k+]$ we have $d([k+])\geq d(A)$, and the inequality is strict for $k \neq 3$.
\end{thm}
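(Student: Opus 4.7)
The plan is to reduce to a $0$-rooted divisor via Lemma \ref{L15} and then splice together the bounds of Theorem \ref{crlodd} and Theorem \ref{uppereven}. Setting $r = \min(A) \geq 1$ and $m = \max(A) \leq k$, Lemma \ref{L15} gives
\[
d(A) = (r+1)\,d(A-\{r\}),
\]
where $A - \{r\}$ is a $0$-rooted set with maximum $m - r$. Theorem \ref{crlodd} yields $d(A-\{r\}) \leq d([m-r])$, and Theorem \ref{uppereven}, applied with $j = r$, yields $(r+1)\,d([k-r]) \leq 2\,d([k-1]) = d([k+])$, where the last equality is Lemma \ref{L15} applied to $[k+]$ itself.

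To splice these two bounds together I need the sequence $d([n])$ to be nondecreasing, which I would record as a short preliminary observation via Corollary \ref{crl:parts}: appending a $1$ to any headstrong composition of $n+1$ produces a distinct headstrong composition of $n+2$, so $d([n]) \leq d([n+1])$, with strict inequality whenever $n \geq 1$. Monotonicity gives $d([m-r]) \leq d([k-r])$, and chaining the three estimates proves the weak inequality $d(A) \leq d([k+])$ in all cases.

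For strictness when $k \neq 3$, I would trace the equality conditions. Theorem \ref{uppereven} is tight only when $r = 1$ or when $(k, r) = (3, 2)$, and the hypothesis $k \neq 3$ eliminates the latter. Theorem \ref{crlodd} is tight only when $A - \{r\} = [m-r]$, and the monotonicity step is tight only when $m = k$. If all three were simultaneously tight we would have $r = 1$, $m = k$, and $A - \{1\} = [k-1]$, forcing $A = [k+]$ and contradicting $A \subsetneq [k+]$. Hence at least one of the three inequalities is strict, and $d(A) < d([k+])$.

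The main obstacle is not substantive but organizational: each of the three input results carries its own equality-case analysis, and the argument relies on the fortunate fact that when $k \neq 3$ the unique residual equality case of Theorem \ref{uppereven} (namely $r = 1$) is incompatible with simultaneous equality in the other two bounds except at $A = [k+]$ itself. The excluded case $k = 3$ is sharp: the subset $\{2, 3\} \subsetneq [3+]$ attains $d(\{2,3\}) = 3 \cdot d(\{0,1\}) = 6 = d([3+])$, corresponding exactly to the second equality case $(k,j) = (3,2)$ of Theorem \ref{uppereven}.
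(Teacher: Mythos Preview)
Your proof is correct and follows essentially the same route as the paper's: reduce via Lemma \ref{L15}, bound the $0$-rooted shift by Theorem \ref{crlodd}, use monotonicity of $n\mapsto d([n])$, and close with Theorem \ref{uppereven}, then trace the equality cases. The only cosmetic difference is that you justify the monotonicity $d([m-r])\leq d([k-r])$ via the headstrong-composition bijection, whereas the paper uses it without comment (it is in fact an immediate consequence of Theorem \ref{crlodd}, since $[n]\in\mcal{Z}_{\leq n+1}$ and $[n]\neq[n+1]$ forces $d([n])<d([n+1])$; note this also gives strictness at $n=0$, which your stated range ``$n\geq 1$'' omits but your later claim ``tight only when $m=k$'' correctly requires).
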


\begin{proof}
	Let $a \eqdef \min(A)$. Note that $A \subsetneq [k+]$ implies $a \geq 1$. By Proposition \ref{L15} we have $d(A) = (a+1)d(A-\{a\})$. By Corollary \ref{crlodd} we have $d(A-\{a\}) \leq d([\max(A)-a])$ and the inequality is strict if $A-\{a\} \neq [\max(A)-a]$. Moreover, $d([\max(A)-a]) \leq d([k-a])$ and the inequality is strict if $\max(A) \neq k$.

	Therefore, by Theorem \ref{uppereven} we have
	\[d(A) \leq (a+1)d([k-a]) \leq 2d([k-1]) = d([k+])\]
	and the inequality is strict if $k\neq 3$ and $a >1$.

	In summary we have $d(A) \leq d([k+])$ and if $k \neq 3$ equality may only hold if $a=1$ and $\max(A) = k$ and $A-\{1\} = (A-\{a\}) = [\max(A)-a] = [k-1]$. This contradicts the assumption that $A \subsetneq [k+]$. Thus, for $k \neq 3$ the inequality is strict.
\end{proof}

It is also true that for $k=3$ the inequality can fail to be strict. For example, $d([3+]) = 6 = d(\{2, 3\})$. We now have Conjecture \ref{conjeven}.

\begin{thm}
\label{crleven}
	For $k \geq 1$, the set $[k+]$ is the maximum of $d(\cdot)$ in $\pset{[k]}\setminus\{\emptyset\}$, and this is the unique maximum for $k\neq 1, 3$. Equivalently, in base $2$, among all $k$-digit numbers $n$, the maximal value of $d_2(n)$ occurs at $n = 2^k-2 = \base{2}{111\ldots 10}$, and this is the unique maximum for $n \neq 2, 4$.
\end{thm}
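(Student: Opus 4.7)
The plan is to combine Theorem \ref{crlodd} (the maximum on $\mcal{Z}_{\leq k}$) with Theorem \ref{crlZ+} (the maximum on $\mcal{Z}^+_{\leq k}$), bridged by Lemma \ref{L15} and Theorem \ref{propdoubling}, which together give the inequality $d([k]) \leq d([k+])$.

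Given a nonempty $A \subseteq [k]$, I would use the partition $\pset{[k]}\setminus\{\emptyset\} = \mcal{Z}_{\leq k} \sqcup \mcal{Z}^+_{\leq k}$. If $A \in \mcal{Z}^+_{\leq k}$, then $A \subseteq [k+]$, and Theorem \ref{crlZ+} gives $d(A) \leq d([k+])$, with strict inequality whenever $A \subsetneq [k+]$ and $k \neq 3$. If instead $A \in \mcal{Z}_{\leq k}$, then $A$ is $0$-rooted and Theorem \ref{crlodd} yields $d(A) \leq d([k])$; combining Lemma \ref{L15} and Theorem \ref{propdoubling} then gives $d([k]) \leq 2 d([k-1]) = d([k+])$, with the first inequality strict for $k > 1$. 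In either case $d(A) \leq d([k+])$, so $[k+]$ achieves the maximum.

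For the uniqueness claim when $k \neq 1, 3$: in the first case the strict form of Theorem \ref{crlZ+} rules out $A \neq [k+]$; in the second case (where necessarily $k \geq 2$) the strict form of Theorem \ref{propdoubling} gives $d(A) \leq d([k]) < d([k+])$, so no $0$-rooted set can tie. The two exceptional ties are then witnessed explicitly: for $k = 1$, the inequality in Theorem \ref{propdoubling} is an equality, yielding $d([1]) = 2 = d([1+])$; for $k = 3$, the inequality in Theorem \ref{crlZ+} is an equality, and one checks via Lemma \ref{L15} that $d(\{2,3\}) = 3 \cdot d(\{0,1\}) = 6 = d([3+])$.

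No step presents a serious obstacle here, since the hard work has already been done in the preceding sections; the argument amounts to careful bookkeeping of when the strict-inequality clauses of Theorems \ref{crlZ+} and \ref{propdoubling} fail, and it is precisely these two failure modes (one for each clause) that produce the exceptional cases $k = 1$ and $k = 3$. Finally, the equivalent base-$2$ lunar statement follows immediately from the monoid homomorphism $\beta$ of Section \ref{s:lunar}: applied with subset parameter $k - 1$, the subset theorem translates into the claim for $k$-digit binary numbers, with $[(k-1)+]$ mapping under $\beta$ to $2^k - 2$.
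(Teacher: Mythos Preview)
Your proposal is correct and follows essentially the same route as the paper's proof: partition $\pset{[k]}\setminus\{\emptyset\}$ into $\mcal{Z}_{\leq k}$ and $\mcal{Z}^+_{\leq k}$, handle the $0$-rooted case via Theorem~\ref{crlodd} together with Lemma~\ref{L15} and Theorem~\ref{propdoubling}, handle the non-$0$-rooted case via Theorem~\ref{crlZ+}, and read off the exceptional values of $k$ from the equality cases of Theorems~\ref{propdoubling} and~\ref{crlZ+}. Your explicit verification of the ties at $k=1$ and $k=3$ and your remark on the index shift in the $\beta$-translation are small additions not spelled out in the paper, but the argument is otherwise identical.
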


\begin{proof}
	By Lemma \ref{L15} we have $d([k+]) = 2d([k-1])$. By Theorem \ref{propdoubling} we have $2d([k-1]) \geq d([k])$ with strictly inequality for $k>1$. By Theorem \ref{crlodd} we know that $[k]$ is the unique maximum of $d(\cdot)$ in $\mathcal{Z}_{\leq k}$. We conclude that $d([k+]) \geq d(A)$ for any $A \in \mathcal{Z}_{\leq k}$, and the inequality is strict for $k>1$.

	Next, Theorem \ref{crlZ+} then shows that for any $A \in \mcal{Z}^+_{\leq k}$ we have $d([k+]) \geq d(A)$, and the inequality is strict if $k \neq 3$ (and $A \neq [k+])$.

	Since $\pset{[k]} = \{\emptyset\} \dunion \mcal{Z}_{\leq k} \dunion \mcal{Z}^+_{\leq k}$ we conclude that $[k+]$ is the maximum of $d(\cdot)$ in $\pset{[k]}\setminus\{\emptyset\}$, and this is the unique maximum for $k\neq 1, 3$.
\end{proof}
\section{The triangle of headstrong compositions}
\label{s:headstrongtriangle}
In Section \ref{s:bases} we prove Conjecture \ref{conjbases}. One important part of the proof is the observation that $d_b$ can be given in terms of powers of $d_2$-divisors (see Theorem \ref{manytoone}). This will require us to compare headstrong compositions by the number of parts (rather than simply count the total number). The purpose of this section is to help us establish a convenient recurrence for these numbers.

Since $0$ is not allowed as a part of a composition, a composition of $n$ may have at most $n$ parts. Letting the rows indicate $n$, and the columns the number of parts, we obtain a triangle of compositions. In the case of unrestricted compositions it is easy to see that this is in fact the Pascal triangle of binomial coefficients. This signifies the importance of compositions in probability. Another example, crucial for this section, is the following Montmort-Moivre\footnote{A similar formulation with a deck of cards is sometimes referred to as Simon Newcomb type problems, popularized in \cite{Mac}.} type problem: consider $m$ urns, each containing $s$ balls labelled $1, \ldots, s$. If one ball is drawn uniformly at random from each of the $m$ urns, what is the probability that the sum of the labels is $n$?

This answer leads to a definition of $C(n, m, s)$, the number of $m$-compositions of $n$ such that no part exceeds $s$. This quantity has been studied by statisticians (see \cite{jordan}, \cite{charl}; and \cite{gencomp} for generalizations where each part is bounded above and below), and we have the generating function
 \begin{align*}
 g(z) &= (z+z^2+\cdots+z^s)(z+z^2+\cdots+z^s)\cdots(z+z^2+\cdots+z^s)\\
 &=z^{m}\frac{(1-z^s)^m}{(1-z)^m}.
 \end{align*}

We are interested in the triangle of headstrong compositions (Table \ref{table:headstrong}), enumerated in sequence A184957 of the OEIS \cite{sloane} (by rows). We let $H(n, k)$ denote the number of a headstrong $k$-compositions of $n$. Note that $\sum_{k=1}^{n}H(n, k) = \sum_{k=1}^{n}F(k, n)$ (though the rows and columns of the two tables do not agree).

\begin{table}[!h]
\begin{adjustbox}{width=\columnwidth,center}
\begin{tabular}{l|ccccccccccc}
$H(n, k)$& $k=1$ & $k=2$ & $k=3$ & $k=4$ & $k=5$ & $k=6$ & $k=7$ & $k=8$ & $k=9$ & $k=10$ \\ \hline
$n=1$ & $1$ &  &  &  &  &  &  &  &  &  \\
$n=2$ & $1$ & $1$ &  &  &  &  &  &  &  &  \\
$n=3$ & $1$ & $1$ & $1$ &  &  &  &  &  &  & \\
$n=4$ & $1$ & $2$ & $1$ & $1$ &  &  &  &  &  & \\
$n=5$ & $1$ & $2$ & $3$ & $1$ & $1$ &  &  &  &  & \\
$n=6$ & $1$ & $3$ & $4$ & $4$ & $1$ & $1$ &  &  &  & \\
$n=7$ & $1$ & $3$ & $6$ & $7$ & $5$ & $1$ & $1$ &  &  &\\
$n=8$ & $1$ & $4$ & $8$ & $11$ & $11$ & $6$ & $1$ & $1$ &  & \\
$n=9$ & $1$ & $4$ & $11$ & $17$ & $19$ & $16$ & $7$ & $1$ & $1$ & \\
$n=10$ & $1$ & $5$ & $13$ & $26$ & $32$ & $31$ & $22$ & $8$ & $1$ & $1$\\
\end{tabular}
\end{adjustbox}
\caption{\small Table of $H(n, k)$, headstrong $k$-compositions of $n$}
\label{table:headstrong}
\end{table}

To introduce the recurrence we first recall that given a function $f: \nats \to \reals$, its \demph{(first forward) difference} $\Delta f$ is defined by $\Delta f(n) = f(n+1)-f(n)$. One then defines recursively $\Delta^k f = \Delta(\Delta^{k-1} f)$. (We refer the interested reader to \cite{concrete} for an introduction, and \cite{jordan} for an extensive treatment of the finite calculus.) Writing the sequence $\Delta^k f$ in row $k$ we obtain the \demph{difference table} for the sequence represented by $f$ (with the convention that $\Delta^0 f = f$). It is conventional to align the table so that the difference of two items appears in between them, and thereby obtain a difference triangle (of course, it is only by curtailing the sequence that the shape of a triangle is obtained).

\begin{table}[!h]
\centering
\begin{tabular}{r|cccccccccccccccc}
$f$ & $1$ & & $5$ & & $14$ & & $30$ & & $55$ \\
$\Delta^1 f$ & & $4$ & & $9$ & & $16$ & & $25$\\
$\Delta^2 f$ & & & $5$ & & $7$ & & $9$ \\
$\Delta^3 f$ & & & & $2$ & & $2$ \\
$\Delta^4 f$ & & & & & $0$ 
\end{tabular}
\caption{\small Difference table for the sum of squares $f(n) = \sum_{k=1}^{n} k^2$}
\end{table}

It can be shown by induction that if $f$ is a polynomial of degree $k$, then $\Delta^{k+1}f = 0$. Conversely, if $\Delta^k f = 0$, then $f$ is a $(k-1)$-degree polynomial. It is clear that in such cases the entire difference table can be recovered from the first diagonal. In particular, if the first diagonal is $(d_0, d_1, \ldots, d_{n-1}, 0)$ the sequence itself is given by $f(n) = \sum_{k=0}^{n-1}d_{k}{n-1\choose k}$. 

The purpose of the current section is to prove that the triangle of headstrong compositions is self-generating in the following manner. The first column and the last element of each row are both $1$, trivially. The $(k+1)$-th diagonal is $H(k+1, 1), H(k+2, 2), H(k+3, 3), \ldots$ If we construct the difference table for this sequence, the first diagonal of the difference table will be exactly the $k$-th row of the headstrong triangle. Thus, starting from the data that the first column and the last element are each $1$ we have the table in Figure \ref{init}. We know that the second diagonal is given by the first row according to the formula $\sum_{k=0}^{0}{n-1\choose k}$, so it is the constant $1$ sequence. This gives us the table in Figure \ref{init2}, and we continue in this fashion. For example, the fifth diagonal is given by 
\[{n-1\choose 0} + 2{n-1\choose 1}+{n-1\choose 2}+{n-1\choose 3} = \frac{1}{6}(n^3-3n^2+14n-6)\]
and so on. It is clear that the whole triangle can be recovered in this manner.

\mbox{}

\begin{table}[!h]
    \begin{subfigure}[b]{0.35\linewidth}
        \centering
        \begin{adjustbox}{width=1.5\columnwidth}
        \begin{tabular}{cccccc}\small
			$1$ &  &  &  &  & \\
			$1$ & $1$ &  &  &  &  \\
			$1$ & $H(3, 2)$ & $1$ &  &  & \\
			$1$ & $H(4, 2)$ & $H(4,3)$ & $1$ &  & \\
			$1$ & $H(5, 2)$ & $H(5,3)$ & $H(5,4)$ & $1$ &  \\
			$1$ & $H(6,2)$ & $H(6,3)$ & $H(6,4)$ & $H(6,5)$ & $1$\\
		\end{tabular}
		\end{adjustbox}
        \caption{initial data}
        \label{init}
    \end{subfigure}\hfill
    \begin{subfigure}[b]{0.35\linewidth}
        \centering
        \begin{adjustbox}{width=1.3\columnwidth}
        \begin{tabular}{cccccc}\small
			$1$ &  &  &  &  &   \\
			$1$ & $1$ &  &  &  &   \\
			$1$ & $1$ & $1$ &  &  &   \\
			$1$ & $H(4, 2)$ & $1$ & $1$ &  &   \\
			$1$ & $H(5, 2)$ & $H(5,3)$ & $1$ & $1$ &  \\
			$1$ & $H(6,2)$ & $H(6,3)$ & $H(6,4)$ & $1$ & $1$ \\
		\end{tabular}
		\end{adjustbox}
        \caption{filled $2$nd diagonal}
        \label{init2}
    \end{subfigure}

        \begin{subfigure}[b]{0.35\linewidth}
        \centering
        \begin{tabular}{cccccc}\small
			$1$ &  &  &  &  & \\
			$1$ & $1$ &  &  &  &  \\
			$1$ & $1$ & $1$ &  &  & \\
			$1$ & $2$ & $1$ & $1$ &  & \\
			$1$ & $H(5, 2)$ & $3$ & $1$ & $1$ &  \\
			$1$ & $H(6,2)$ & $H(6,3)$ & $4$ & $1$ & $1$\\
		\end{tabular}
        \caption{filled $3$rd diagonal}
        \label{init3}
    \end{subfigure}\hfill
    \begin{subfigure}[b]{0.35\linewidth}
        \centering
        \begin{tabular}{cccccc}\small
			$1$ &  &  &  &  & \\
			$1$ & $1$ &  &  &  &  \\
			$1$ & $1$ & $1$ &  &  & \\
			$1$ & $2$ & $1$ & $1$ &  & \\
			$1$ & $2$ & $3$ & $1$ & $1$ &  \\
			$1$ & $3$ & $4$ & $4$ & $1$ & $1$\\
		\end{tabular}
        \caption{filled table}
        \label{init4}
    \end{subfigure}
    \caption{\small Lunar arithmetic}
    \label{table:headstrongrecurrence}
\end{table}

Entry $H(n, k)$ of the headstrong triangle is the $k$-th entry of the $d\eqdef n-k+1$ diagonal, which is goverened by the $d-1$ row. Thus, the claim expresses a recurrence relation of the following form:

\begin{thm}
\label{prop:headstrong}
Let $m, n$ be positive integers. We have
\[H(n, m) = \begin{cases}
	0 & \mbox{ if } m>n,\\
	1 & \mbox{ if } m=n,\\
	1 & \mbox{ if } m=1.
\end{cases}\]
In all other cases $n>m>1$ we have
	\begin{equation}
	\tag{$\star$}
	\label{rec}
	H(n, m) = \sum_{j=1}^{n-m}H(n-m, j){m-1 \choose j-1}
	\end{equation}
\end{thm}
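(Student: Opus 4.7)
The plan is to prove \eqref{rec} by a direct combinatorial bijection, leveraging the interpretation of $H(n, m)$ as the number of headstrong $m$-compositions of $n$; that is, tuples $(c_1, \ldots, c_m)$ of positive integers with $\sum_i c_i = n$ and $c_1 = \max_i c_i$. The three boundary cases are immediate: $H(n, m) = 0$ for $m > n$ since $m$ positive integers sum to at least $m$; $H(n, n) = 1$ since the only possibility is $(1, 1, \ldots, 1)$; and $H(n, 1) = 1$ since the only possibility is $(n)$.

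For the main case $n > m > 1$, the key preliminary observation is that a headstrong $m$-composition $(c_1, \ldots, c_m)$ of $n$ with $c_1 = 1$ must satisfy $c_i = 1$ for every $i$, forcing $n = m$; so $n > m$ compels $c_1 \geq 2$. Using this, I would send each such composition to a pair $(D, S)$ defined as follows: subtract $1$ from every entry to get a tuple of nonnegative integers summing to $n - m$; let $S \subseteq \{2, \ldots, m\}$ record those indices $i \geq 2$ where $c_i = 1$ (i.e., where the subtraction produces a zero); and let $D = (d_1, \ldots, d_j)$, with $j = m - |S|$, be the sequence of \emph{positive} entries $c_i - 1$ retained in their original order. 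Because $c_1 \geq c_i$ for every $i$, the first entry $d_1 = c_1 - 1$ is $\geq d_k$ for every $k$, so $D$ is a headstrong $j$-composition of $n - m$. Note that position $1$ is never in $S$, since $c_1 - 1 \geq 1$.

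The inverse map is transparent: given a headstrong $j$-composition $D$ of $n - m$ and a set $S \subseteq \{2, \ldots, m\}$ with $|S| = m - j$, insert a zero at each position in $S$, place the entries of $D$ in the remaining $j$ positions in order (so $d_1$ lands in position $1$), and then add $1$ everywhere. The result is a headstrong $m$-composition of $n$. Consequently, for each $1 \leq j \leq n - m$, the number of pairs with that value of $j$ is exactly $H(n - m, j) \binom{m - 1}{m - j} = H(n - m, j) \binom{m - 1}{j - 1}$, and summation over $j$ yields \eqref{rec}, with values $j > \min(m, n - m)$ contributing nothing since either factor vanishes. The only real subtlety is the observation that $c_1 \geq 2$ whenever $n > m$; without it, stripping zeros could delete the leading entry, destroying the headstrong structure of $D$. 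Once that observation is in place, the bijection and the count are routine.
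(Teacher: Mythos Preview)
Your proof is correct and takes a genuinely different route from the paper's. The paper proceeds via generating functions: it writes $H(n,m)=\sum_{s\geq 1} C(n-s,m-1,s)$ (conditioning on the first part $s$, where $C$ counts compositions with bounded parts), extracts the generating function $f(z)=\sum_{s\geq 1} z^{m+s-1}\bigl(\tfrac{1-z^s}{1-z}\bigr)^{m-1}$, and then verifies algebraically, using the binomial theorem, that the right-hand side of \eqref{rec} has the same generating function. Your argument is instead a direct bijection: subtract $1$ from every part, record the positions $S\subseteq\{2,\ldots,m\}$ where zeros appear, and read off the surviving positive entries as a headstrong $j$-composition of $n-m$; the key point that $c_1\geq 2$ when $n>m$ guarantees the leading entry survives, so headstrongness is preserved. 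Your approach is more elementary and more transparent combinatorially, explaining \emph{why} the binomial coefficient $\binom{m-1}{j-1}$ appears (it is the choice of which $m-j$ of the non-leading positions were reduced to zero). The paper's generating-function machinery, on the other hand, fits into its broader framework connecting $H(n,m)$ to the Montmort--Moivre numbers $C(n,m,s)$, which may be useful for other manipulations but is heavier than necessary for this identity alone.
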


\begin{proof}
	It is easy to see that for $m>1$ we have $H(n, m) = \sum_{s=1}^{n-1}C(n-s, m-1, s)$, which (using the generating function for $C(n, m, s)$ from the beginning of this section) gives us a generating function
 \begin{align*}
 	f(z) &= \sum_{s=1}z^sz^{m-1}\frac{(1-z^s)^{m-1}}{(1-z)^{m-1}}\\
 	\\
 	&= \sum_{s=1}z^{m+s-1}\left(\frac{1-z^s}{1-z}\right)^{m-1}
 \end{align*}
 and observe that this generating function gives the correct result for $m=1$ as well. Thus, it is indeed the generating function for $H(n, m)$.

 On the other hand, the hypothesis for $n>m>1$
 \begin{align*}
 H(n, m) &= \sum_{j=1}H(n-m, j){m-1 \choose j-1} \\
 &=1+\sum_{j=2}H(n-m,j){m-1\choose j-1}\\
 &= 1+ \sum_{j=2}{m-1\choose j-1}\sum_{s=1}C(n-m-s, j-1, s)
 \end{align*}
 gives the generating function
 \[h(z) = z^m+\sum_{s=1}z^{m+s}+\sum_{j=2}{m-1\choose j-1}\sum_{s=1}z^{m+s}z^{j-1}\frac{(1-z^s)^{j-1}}{(1-z)^{j-1}}
 \]
 (Note that the coefficient of $z^m$ accounts for the case $n=m$; while $\sum_{s=1}z^{m+s}$ accounts for the cases where $n>m$ and $m=1$.) However, this is the same generating function for $H(n, m)$, since

\begin{align*}
z^m+\sum_{s=1}z^{m+s}\sum_{j=1}{m-1\choose j-1}\left(z\frac{1-z^s}{1-z}\right)^{j-1}
 		&=z^m+\sum_{s=1}z^{m+s}\left(1+z\frac{1-z^s}{1-z}\right)^{m-1}\\
 		\\
 		&=\sum_{s=1}z^{m+s-1}\left(\frac{1-z^{s}}{1-z}\right)^{m-1}
 \end{align*}
 \end{proof}

 Using Theorem \ref{prop:headstrong} we may now prove a relation between the rows of the triangle of headstrong compositions that will play a key role in our proof of Conjecture \ref{conjbases}.

 \begin{crl}
 \label{crl:headstrongbounds}
 	Let $b \geq 2 \in \nats$. Then, for any positive integer $n \in \nats$,
 	\[2\sum_{m=1}^n H(n,m)b^{m} < \sum_{m=1}^{n+1}H(n+1,m)b^{m}.\]
 \end{crl}

 \begin{proof}
 	It is easy to verify the claim for $n=1$ and $n=2$ directly. For $n=1$ it reduces to $2b < b+b^2$, and for $n=2$ it reduces to $2b+2b^2< b+b^2+b^3$. Assume therefore $n>2$. For $n>m>0$ we have by Theorem \ref{prop:headstrong}
 	\begin{align*}
 		H(n+1, m+1) &= \sum_{j=1}H(n-m, j){m \choose j-1}\\
 		&=\sum_{j=1}H(n-m, j){m-1\choose j-1} + \sum_{j=2}H(n-m, j){m-1\choose j-2}\\
 		&=H(n, m) + \sum_{j=2}H(n-m, j){m-1\choose j-2}.
 	\end{align*}
 	(Note that the last summand is $0$ unless $n-m>1$, reflecting the fact that $H(n+1,n) = H(n,n-1)$.) Denoting $h(n, m) = \sum_{j=2}H(n-m, j){m-1\choose j-2}$, we find
 	\begin{align*}
 		\sum_{m=1}^{n+1} H(n+1,m)b^{m} &= b+\sum_{m=1}^{n} H(n+1,m+1)b^{m+1}\\
 		&=b+\sum_{m=1}^{n} (H(n,m)+h(n,m))b^{m+1}\\
 		&= b+\sum_{m=1}^{n}bH(n,m)b^m + \sum_{m=1}^{n}bh(n,m)b^m\\
 	\end{align*}
 	However, $b\geq 2$ (and all summands are nonnegative) so that
 	\begin{align*}
 		\sum_{m=1}^{n+1} H(n+1,m)b^{m} &= b+\sum_{m=1}^{n}bH(n,m)b^m + \sum_{m=1}^{n}bh(n,m)b^m\\
 		&\geq b+\sum_{m=1}^{n}bH(n,m)b^m \\
 		&\geq b+\sum_{m=1}^{n}2H(n,m)b^m\\
 		&>2\sum_{m=1}^{n}H(n,m)b^m.
 	\end{align*}
 \end{proof}
\section{Sumsets arrays}
\label{s:bases}
We have seen that sumsets correspond to binary lunar multiplication and can be used to analyse lunar divisors. Lunar arithmetic is defined for arbitrary bases $b \geq 2$. We now prove that higher bases correspond to multisets of sumsets. Recall that multisets are ``sets with repetitions.'' While $\{1, 1, 2\}$ and $\{1, 2\}$ represent the same set, they represent two different multisets. A set of natural numbers can be identified with a function $f: \nats \to \{0, 1\}$ which decides set-memebership; i.e.~$n$ belongs to the set if and only if $f(n) = 1$. A multiset of natural numbers can be identified with a function $f: \nats \to \nats$, which decides set-membership and multiplicity. All multisets in this section are finite multisets of natural numbers.

There is a grading of multisets by multiplicity. For $b \in \nats$, let $\mcal{M}^{b}$ denote the collection of finite multisets (of natural numbers) whose maximal multiplicity does not exceed $b$. That is,
\[\mcal{M}^b \eqdef \st{f: \nats \to \nats}{f(\nats) \subseteq [b],\exists k \in \nats.(j>k)\implies f(j)=0}.\] 
Note that $\mcal{M}^1$ is simply the collection of finite subsets of natural numbers, while $\mcal{M}^0$ is the emptyset. We have $\mcal{M}^0 \subsetneq \mcal{M}^1 \subsetneq \mcal{M}^2 \subsetneq \cdots$

We make the following definitions analogously \S\ref{s:lunar}. For $k \in \nats$, let $\mcal{M}_k$ denote the collection of multisets of natural numbers whose maximal element is $k$:
\[\mcal{M}_{k} = \st{f:\nats \to \nats}{f(k) \neq 0,\mbox{ and } j>k \implies f(j)=0}.\]
For convenience we also introduce $\mcal{M}_{\leq k} = \Union_{\ell \leq k}\mcal{M}_\ell$, and $\mcal{M} = \Union_{k \in \nats} \mcal{M}_k$ the collection of finite multisets of natural numbers. Finally, we may combine superscripts and subscripts; so that $\mcal{M}^b_k$ is the collection of multisets of natural numbers whose maximal element is $k$, and such that the multiplicity of any element does not exceed $b$ (such a multiset extends a function $f:[k] \to [b]$ to a multiset $f:\nats \to \nats$ by $j>k \implies f(j) = 0$). Note that $\mcal{M}^b = \bigsqcup_{k \in \nats}\mcal{M}^b_k$ (disjoint union).

We have a choice for defining multi-sumsets. The naive definition simply treats multisets as sets, for example $\{1, 1, 2\}+ \{2\} = \{3, 3, 4\}$. This does not take advantage of the extra-structure of multisets. The definition below takes into account multiplicity, and allows different interactions between ``multiplicity levels'', so that $\{1, 1, 2\}+\{2\} = \{3, 4\}$ while $\{1, 1, 2\}+\{2, 2\} = \{3, 3, 4\}$.

\begin{figure}[!h]
    \begin{subfigure}[b]{0.3\linewidth}
        \centering
       \[
        \begin{mx}
        	\{0,1,2\} \\ \{0,1\} \\ \{0,1\} \\ \{0,1\} \\ \{0,1\} \\
        	\{0,1\} \\ \{0\} \\ \{0\} \\ \{0\} 
        \end{mx}
        +
        \begin{mx}
        	\{0,1,2\} \\ \{0,1,2\} \\ \{0,1\} \\ \{0,1\} \\ \{0\} \\
        	\{0\} \\ \{0\} \\ \{0\} \\ \emptyset 
        \end{mx}
        =
        \begin{mx}
        	\{0, 1, 2, 3, 4\} \\ \{0,1, 2, 3\} \\ \{0,1,2\} \\ \{0,1,2\} \\ \{0,1\} \\
        	\{0,1\} \\ \{0\} \\ \{0\} \\ \emptyset 
        \end{mx}
        \]
        \caption{multiset addition in $\mcal{M}^9$}
    \end{subfigure}\hfill
    \begin{subfigure}[b]{0.3\linewidth}
        \centering
   	 	\begin{tabular}{lr}
		& $169$ \\
		$\lm$ & $248$ \\ \hline
		& $168$ \\
		$\lp$ & $144$\stp \\
		$\lp$ & $122$\stp\stp \\ \hline
		&$12468$
		\end{tabular} 
        \caption{base 10 lunar multiplication}
    \end{subfigure}
    \caption{\small Two representations of multiset addition}\label{fig:multisumset}
\end{figure}

There is a convenient representation for elements of $\mcal{M}^b$ as an array of $b$ sets. Let $f \in \mcal{M}^b$. Let $\vec{A}_f = (A_1, A_2, \ldots, A_b)$, where each coordinate $A_i$ ($1 \leq i \leq b$) is a finite subset of $\nats$ defined as follows: for $a \in \nats$ we have $a \in A_i \iff f(a)\geq i$. The multisumset operation is now defined coordinatewise. Given two such arrays $\vec{A} = (A_1, \ldots, A_b)$ and $\vec{B} = (B_1, \ldots, B_b)$ we define $\vec{A}+\vec{B} = (A_1+B_1, \ldots, A_b+B_b)$, with the convention that $S+\emptyset = \emptyset$ for any $S \subseteq \nats$. This operation makes $\mcal{M}^b$ into a commutative monoid. Note that one of the features of this representation is that the coordinates form a descending chain $A_1 \supseteq A_2 \supseteq \cdots \supseteq A_b$.

For $b \geq 1$, let $\mcal{B}$ denote the set of base-$(b+1)$ sequences. There is a natural bijection $\beta: \mcal{M}^b \to \mcal{B}$. First, $\beta(\emptyset) = 0|_b$. Next, let $f\in \mcal{M}^b_k\setminus \emptyset$. Then,
\[\beta(f) = f(k)f(k-1)\ldots f(1)f(0)|_{b+1}.\]

\begin{thm}
	$\beta: \mcal{M}^b \to \mcal{B}$ is a monoid-homomorphism between $\mcal{M}^b$ equipped with multisumset addition, and base $(b+1)$ numbers equipped with lunar multiplication.
\end{thm}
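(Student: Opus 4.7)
The plan is to follow the structure of the binary case (the theorem after formula \eqref{key} in \S\ref{s:lunar}), since the bijection $\beta$ is built into the set-up. Two things need checking: (i) neutral elements are mapped to neutral elements, and (ii) $\beta(f+g) = \beta(f) \lm \beta(g)$ for all $f, g \in \mcal{M}^b$.

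For (i), the empty multiset maps to $0|_{b+1}$, which is absorbing on both sides. The neutral element of $(\mcal{M}^b, +)$ corresponds in the array representation to $(\{0\}, \{0\}, \ldots, \{0\})$ with $b$ coordinates; this is the multiset $f_0$ with $f_0(0) = b$ and $f_0(j) = 0$ for $j \geq 1$. Its image under $\beta$ is the single digit $b|_{b+1}$, which is the neutral element of $(\mcal{B}, \lm)$ in base $b+1$: the digit product is $\min$, and $\min(b, x) = x$ for any $x \in \{0, 1, \ldots, b\}$.

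For (ii), fix $f, g \in \mcal{M}^b$, both nonempty, and write $\vec{A} = (A_1, \ldots, A_b)$, $\vec{B} = (B_1, \ldots, B_b)$ for their array representations, so $A_\ell = \{a : f(a) \geq \ell\}$ and analogously for $B_\ell$. The key observation is that the descending chain $A_1 \supseteq A_2 \supseteq \cdots$ (and likewise for $\vec{B}$) is preserved by sumset addition: $(A_1+B_1) \supseteq (A_2+B_2) \supseteq \cdots$, including the cases where one of $A_\ell, B_\ell$ is empty (where the convention $S+\emptyset = \emptyset$ is consistent with the nesting). Hence the multiplicity of $i$ in $h = f+g$ equals the largest $\ell \in \{0, 1, \ldots, b\}$ for which $i \in A_\ell+B_\ell$. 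Unwinding, $i \in A_\ell + B_\ell$ if and only if there exist $j, k \in \nats$ with $j+k = i$ and $\min(f(j), g(k)) \geq \ell$, so taking the maximum over $\ell$ gives
\[h(i) = \max\{\min(f(j), g(k)) : j+k = i\}.\]
On the other hand, the $i$-th digit of $\beta(f) \lm \beta(g)$ in base $b+1$ lunar arithmetic is by definition
\[c_i = \bigoplus_{j+k=i} f(j) \lm g(k) = \max\{\min(f(j), g(k)) : j+k = i\},\]
so the two coincide and $\beta(f+g) = \beta(f) \lm \beta(g)$.

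The main obstacle is conceptual rather than computational: one must notice that because the array coordinates form a descending chain which survives to the coordinatewise sumset, the multiplicity of any element can be read off level by level as a ``largest $\ell$''. Once this observation is in place, the identity falls out by unwinding the definitions of lunar digit-multiplication ($\min$) and lunar digit-addition ($\max$); taking $b = 1$ recovers the binary argument of \S\ref{s:lunar}.
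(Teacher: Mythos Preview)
Your proof is correct and follows essentially the same approach as the paper's: verify the identity element is preserved, then compute the $i$-th digit of $\beta(f+g)$ and match it to the max--min formula defining the $i$-th digit of $\beta(f)\lm\beta(g)$. Your version is arguably cleaner in one respect: you make explicit that the descending chain $A_1 \supseteq A_2 \supseteq \cdots$ is preserved under coordinatewise sumset, which is what justifies reading the multiplicity as ``largest $\ell$ with $i \in A_\ell+B_\ell$''; the paper uses this implicitly when it asserts that the digit $d_j$ equals the number of levels containing $j$.
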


\begin{proof}
	Let $\mathbf{0}$ denote the multiset $\{0, 0, \ldots, 0\}$ with $b$ repetitions of $0$. Then, $\beta(\mathbf{0}) = b|_{b+1}$, which is the maximal digit. This shows that the neutral element is mapped to the neutral element.

	Next, let $f, g \in \mcal{M}^b$. We need to prove 
	\[\beta(f+g) = \beta(f)\lp\beta(g).\]
	The claim is clear if one of $f, g$ is the emptyset. Assume therefore that $f, g \in \mcal{M}^b\setminus \{\emptyset\}$. Consider the base $b+1$ lunar product $\beta(f)\times \beta(g) = c$:
	\begin{align*}
		&\base{b+1}{f(k)f(k-1)\ldots f(1)f(0)} \lm \base{b+1}{g(m)g(m-1)\ldots g(1)g(0)}\\
		&= \base{b+1}{c_{k+m}c_{k+m-1}\ldots c_1c_0}.
	\end{align*}
	Let $\vec{A} = (A_1, \ldots, A_b)$ be the set-array representation of $\beta(f)$, and $\vec{B} = (B_1, \ldots, B_b)$ be the set-array representation of $\beta(g)$. Let $x_{\vec{A}+\vec{B}}$ be the base-$(b+1)$ representation of the multisumset $\vec{A}+\vec{B} = (A_1+B_1, \ldots, A_b+B_b)$. It is a $(k+m)$-digit number, and for $0 \leq j \leq k+m$, the digit $d_j$ is the number of sets $A_i+B_i$ (for $1 \leq i \leq b$) containing $j$.

	Consider all compositions of $j$ as the sum of two numbers:
	\[0+j, 1+(j-1), 2+(j-2), \ldots, j+0\]
	The sum $k+(j-k)$ appears in $A_1+B_1$ if and only if $a_k \geq 1$ and $b_{j-k} \geq 1$. It appears in $A_2+B_2$ if and only if $a_k\geq 2$ and $b_{j-k}\geq 2$; in general it will appear in exactly $\min(a_k, b_{j-k})$ of the $A_i+B_i$. The number of sets $A_i+B_i$ (for $1 \leq i \leq n$) containing $j$ is therefore,
	\[d_j = \max\{\min(a_0, b_{j}), \min(a_1, b_{j-1}), \ldots, \min(a_j, b_{0})\}\]
	(with the convention that $a_i = 0$ for $i \geq k$, and similarly $b_i = 0$ for $i \geq m$). However, this is exactly
	\[c_j = (a_0\lm b_j) \lp (a_1\lm b_{j-1}) \lp \cdots \lp (a_j\lm b_0).\]
\end{proof}

\begin{defi}
Let $f, g \in \mcal{M}^b$ be two multisets with corresponding set-array representations $\vec{A} = (A_1, \ldots, A_b)$, $\vec{B} = (B_1, \ldots, B_b)$. We say that $g$ is a \demph{divisor} of $f$ (or sometimes, $\vec{B}$ is a divisor of $\vec{A}$) if there exists some multiset $h \in \mcal{M}^b$ with set array representation $\vec{C} = (C_1, \ldots, C_b)$ such that $\vec{A} = \vec{B}+\vec{C}$. If $f \neq 0$ is not the constant $0$-function, we define $d(f)$ (or sometimes, $d(\vec{A})$) to be the number of divisors of $f$.
\end{defi}

\begin{lemma}
\label{manytoone}
	Let $b \geq 1$, let $f \in \mcal{M}^b$ be a multiset, and let $\vec{A} = (A_1, \ldots, A_b)$ be its set-array representation. Suppose that $f \neq 0$ is not the constant $0$-function, so that $A_1 \neq \emptyset$.

	Let $f^* \in \mcal{M}^b$ be a multiset given by the set-array representation $\vec{A}^* = (A_1, \emptyset, \emptyset, \ldots, \emptyset)$. Then $d(f^*) \geq d(f)$, and the inequality is strict if $A_2 \neq \emptyset$.
\end{lemma}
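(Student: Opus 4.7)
The plan is to exhibit a direct inclusion of divisor-sets. Given any divisor $\vec{B} = (B_1, \ldots, B_b) \in \mcal{M}^b$ of $\vec{A}$ with witness $\vec{C} = (C_1, \ldots, C_b)$ (so $\vec{B}+\vec{C} = \vec{A}$), I would define a new witness $\vec{C}^* \eqdef (C_1, \emptyset, \emptyset, \ldots, \emptyset)$. This tuple trivially satisfies the descending chain condition $C_1 \supseteq \emptyset \supseteq \cdots \supseteq \emptyset$, so $\vec{C}^* \in \mcal{M}^b$. Computing coordinatewise with the convention $S + \emptyset = \emptyset$, one finds $\vec{B}+\vec{C}^* = (B_1+C_1, \emptyset, \ldots, \emptyset) = (A_1, \emptyset, \ldots, \emptyset) = \vec{A}^*$. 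Hence $\vec{B}$ is a divisor of $f^*$. Since the assignment $\vec{B} \mapsto \vec{B}$ is obviously injective on multisets, this gives $d(f^*) \geq d(f)$.

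For the strict inequality in the case $A_2 \neq \emptyset$, I would exhibit a specific divisor of $f^*$ that lies outside the image of this injection, i.e.~one that does not divide $f$. The natural choice is $\vec{U} \eqdef (\{0\}, \emptyset, \emptyset, \ldots, \emptyset)$. Taking $\vec{A}^*$ itself as the witness, $\vec{U}+\vec{A}^* = (\{0\}+A_1, \emptyset, \ldots, \emptyset) = \vec{A}^*$, so $\vec{U}$ divides $f^*$. On the other hand, if $\vec{U}$ divided $\vec{A}$ with some witness $\vec{D} = (D_1, \ldots, D_b)$, then the second coordinate would require $\emptyset + D_2 = A_2$; but $\emptyset + D_2 = \emptyset$ by the sumset convention, forcing $A_2 = \emptyset$ and contradicting the hypothesis.

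The only subtlety to check carefully is that each constructed tuple genuinely lies in $\mcal{M}^b$, i.e.~that the descending chain condition $A_1 \supseteq A_2 \supseteq \cdots \supseteq A_b$ holds; this is automatic because all coordinates beyond the first are empty. I don't expect any substantive obstacle here: the lemma is a purely structural observation about how the lower levels of the set-array representation impose extra constraints on divisors, and it requires none of the earlier combinatorial machinery from Sections~\ref{s:0-divisors}--\ref{s:headstrongtriangle}.
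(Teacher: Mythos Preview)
Your argument is correct and follows essentially the same approach as the paper: the first part (showing every divisor of $\vec{A}$ is a divisor of $\vec{A}^*$ via the truncated witness $\vec{C}^*$) is identical, and for the strict inequality you exploit the same observation that any divisor of $\vec{A}$ must have nonempty second coordinate when $A_2\neq\emptyset$. The only cosmetic difference is the choice of extra divisor: the paper uses $\vec{A}^*=(A_1,\emptyset,\ldots,\emptyset)$ itself, whereas you use $\vec{U}=(\{0\},\emptyset,\ldots,\emptyset)$; both work for the same reason.
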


\begin{proof}
	Let $g \in \mcal{M}^b$ be a divisor of $f$, with set-array representation $\vec{B} = (B_1, \ldots, B_b)$. That is, there exists some $h \in \mcal{M}^b$ with set-array representation $\vec{C} = (C_1, \ldots, C_b)$ such that $\vec{A} = \vec{B}+\vec{C}$. Letting $h^* \in \mcal{M}^b$ be the multiset given by the set-array representation $\vec{C}^* = (C_1, \emptyset, \emptyset, \ldots, \emptyset)$ we have $\vec{B}+\vec{C}^* = \vec{A}^*$. Thus, every divisor $\vec{B}$ of $\vec{A}$ is also a divisor of $\vec{A}^*$. That is, $d(f^*) \geq d(f)$.

	If $A_2 \neq \emptyset$ then for any divisor $(B_1, \ldots, B_b)$ of $A$ we must have $B_2 \neq \emptyset$. Thus, $(A_1, \emptyset, \ldots, \emptyset)$ is a divisor of $\vec{A}^*$ that is not a divisor of $\vec{A}$. We therefore have in this case $d(f^*)>d(f)$.
\end{proof}

For $b \geq 1$, let us denote by $[k]_b$ the multiset $f \in \mcal{M}^b$ with set-array representation $([k], \emptyset, \emptyset, \ldots, \emptyset)$. Lemma \ref{manytoone} shows that to prove the maximality of $d([k]_b)$, it suffices to prove its maximality among sets of the form $(A, \emptyset, \emptyset, \ldots, \emptyset)$, rather than arbitrary multisets. The next Theorem\footnote{Theorem 17 from \cite{lunar} shows that $d_b({\underbrace{11\ldots 1}_{k}}|_b) = \sum_m H(n, m)(b-1)^m$. Theorem \ref{propdivbase} generalizes this; the formula for $d_b({\underbrace{11\ldots 1}_{k}}|_b)$ then follows from Corollary \ref{crl:parts}.} shows how to count the number of divisors of such multisets in terms of the number of divisors of $A$. The first corollary then implies that $d([k]_b)$ is maximal if $A$ is a 0-rooted set. The second corollary generalizes Lemma \ref{L15} (and is a more explicit version of Lemma 15 in \cite{lunar}) and sets the stage for proving maximality when $A$ is not 0-rooted.

\begin{thm}
\label{propdivbase}
	Let $b \geq 1$, and let $f \in \mcal{M}^b$ be a nonempty set (that is not a proper multiset). Thus, $f$ has the set-array representation $(A, \emptyset, \emptyset, \ldots, \emptyset)$. Then,
	\[d(f) = \sum_{B \mbox{ \tiny divisor of }A} b^{\crd{B}}.\]
\end{thm}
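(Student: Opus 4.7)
The plan is to reduce the problem to the coordinatewise multisumset equation in the set-array representation, observe that because $\vec{A} = (A, \emptyset, \ldots, \emptyset)$ has empty coordinates beyond the first the constraints collapse to a single genuine condition on the leading coordinate, and then carry out a simple enumeration of descending chains.

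First, I would unpack the divisor condition componentwise. A descending chain $\vec{B} = (B_1, \ldots, B_b)$ is the set-array representation of a divisor of $f$ precisely when there exists a descending chain $\vec{C} = (C_1, \ldots, C_b)$ with $B_i + C_i = A_i$ for all $i$, where $A_1 = A$ and $A_i = \emptyset$ for $i \geq 2$. For $i = 1$ this is the genuine sumset equation $B_1 + C_1 = A$, which says exactly that $B_1$ is a sumset divisor of $A$. For $i \geq 2$, however, the condition becomes $B_i + C_i = \emptyset$; under the convention $S + \emptyset = \emptyset$ the choice $C_i = \emptyset$ always works, and taking $C_2 = \cdots = C_b = \emptyset$ automatically respects the descending chain on $\vec{C}$. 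Consequently, for every $\vec{B}$ satisfying $B_1 \supseteq B_2 \supseteq \cdots \supseteq B_b$ with $B_1$ a sumset divisor of $A$, a witness $\vec{C}$ exists; and no other $\vec{B}$ yields a divisor.

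It then remains to count, for each sumset divisor $B$ of $A$, the number of descending chains $B = B_1 \supseteq B_2 \supseteq \cdots \supseteq B_b$. Each such chain is uniquely encoded by the function $\phi \colon B \to \{1, 2, \ldots, b\}$ sending $x$ to the largest index $j$ with $x \in B_j$, and conversely every such function defines a valid chain (simply set $B_j = \phi^{-1}(\{j, j+1, \ldots, b\})$). This yields exactly $b^{\crd{B}}$ chains extending $B$. Summing over all sumset divisors $B$ of $A$ gives the claimed formula.

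The main (essentially only) obstacle is the bookkeeping observation that the trivial choice $C_i = \emptyset$ for $i \geq 2$ simultaneously solves $B_i + C_i = \emptyset$ for every admissible $B_i$ and preserves the descending-chain constraint on $\vec{C}$; once that is noted, the rest reduces to an elementary chain count that is independent of $A$ and depends only on $\crd{B}$.
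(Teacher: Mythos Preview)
Your proof is correct and follows essentially the same approach as the paper: both arguments characterize the divisors of $f$ as descending chains $(B_1,\ldots,B_b)$ with $B_1$ a sumset divisor of $A$ (using the witness $\vec{C}=(C,\emptyset,\ldots,\emptyset)$), and both count such chains by encoding each via the function sending $x\in B$ to the largest index $j$ with $x\in B_j$, yielding $b^{\crd{B}}$ chains per divisor $B$. Your write-up is slightly more explicit about why the trivial choice $C_i=\emptyset$ for $i\ge 2$ is compatible with the descending-chain constraint on $\vec{C}$, but the substance is identical.
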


\begin{proof}
	Let $S$ be a set of cardinality $c \in \nats$, and consider the number of possible chains $S_b \subseteq S_{b-1}\subseteq \cdots \subseteq S_2 \subseteq S_1 = S$.
	The question of which sets in the sequence contain $s \in S$ is answered by a single number $1 \leq k \leq c$, which is the largest number such that $s \in S_k$. Thus, each chain is uniquely identified with a sequence $(s_1, s_2, \ldots, s_c)$ where each $1 \leq s_i \leq b$, and there are $b^c$ such sequences.

	Let $B$ be a divisor of $A$, so that there exists some $C$ with $B+C = A$. Each chain $B_b \subseteq B_{b-1} \subseteq \cdots \subseteq B_1 = B$ gives rise to a divisor of $f$ of the form $(B_1, B_2, \ldots, B_b)$, since
	 \[(B, B_2, \ldots, B_b) + (C, \emptyset, \emptyset, \ldots, \emptyset) = (A, \emptyset, \emptyset, \ldots, \emptyset).\]

	Conversely, if $(B_1, B_2, \ldots, B_b)$ is a divisor of $f$, then $B_b \subseteq B_{b-1}\subseteq \cdots \subseteq B_1$, and $B_1$ is a divisor of $A$.
\end{proof}

\begin{crl}
\label{crl:0bases}
	Let $b \geq 1$, and let $f \in \mcal{M}^b_{\leq k}$ be a nonempty $0$-rooted set, with set-array representation $(A, \emptyset, \emptyset, \ldots, \emptyset)$ (where $A \in \mcal{Z}_{\leq k}$). Then, $d(f) \leq d([k]_b)$, and the inequality is strict for $f \neq [k]_b$.
\end{crl}

\begin{proof}
	The promotion-procedure described in Section \ref{s:0-divisors} either adds elements, or leaves the set as is. Thus, to every divisor $B$ of $A$ there corresponds a divisor $B' \in F(B)$ of $[k]$, with $\crd{B'} \geq \crd{B}$. The claim now follows by the formula in Theorem \ref{propdivbase}, and the observation that there are divisors in $[k]$ that do not arise from promotion (cf.~Lemma \ref{claim:0oddunique} and Lemma \ref{claim:even0unique}).
\end{proof}

\begin{crl}
\label{crl:powers}
	Let $b \geq 1$, and let $f \in \mcal{M}^b$ be a nonempty set with set-array representation $(A, \emptyset, \emptyset, \ldots, \emptyset)$. Let $r \eqdef \min(A)$. Then,
	\[d(f) = (r+1)\sum_{B \mbox{ \tiny divisor of }A-\{r\}} b^{\crd{B}}.\]
\end{crl}

\begin{proof}
	According to the proof of Lemma \ref{L15}, each divisor $B$ of $A-\{r\}$ gives rise to a divisor $B+\{k\}$ of $A$, for $0 \leq k \leq r$. Moreover, all divisors of $A$ are of that form. Since $\crd{B} = \crd(B+\{k\})$, we are done by the formula in Theorem \ref{propdivbase}.
\end{proof}

\begin{thm}
\label{thm:bases}
	Let $b \geq 2$, and let $f \in \mcal{M}^b_{\leq k}$ be a nonempty set with set-array representation $(A, \emptyset, \emptyset, \ldots, \emptyset)$. Then, $d(f) \leq d([k]_b)$, and the inequality is strict for $f \neq [k]_b$.
\end{thm}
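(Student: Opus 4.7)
My plan is to reduce Theorem \ref{thm:bases} to the tools already developed in the paper by splitting on the minimum of $A$. The starting point is Theorem \ref{propdivbase}: for any nonempty $A$ with representation $(A,\emptyset,\ldots,\emptyset)$ we have $d(f) = \sum_{B} b^{\crd{B}}$, where the sum is over sumset divisors of $A$. Combined with Corollary \ref{crl:parts}, this rewrites
\[d([k]_b) = \sum_{m=1}^{k+1} H(k+1,m)\, b^m,\]
so the target inequality becomes a weighted comparison of two divisor-count polynomials, which is exactly the quantity controlled by Corollary \ref{crl:headstrongbounds}.

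The first case, $A \in \mcal{Z}_{\leq k}$, is handled immediately by Corollary \ref{crl:0bases}: it gives $d(f) \leq d([k]_b)$, with strict inequality whenever $A \neq [k]$. So the only substantive case is $A \in \mcal{Z}^+_{\leq k}$, that is $r \eqdef \min(A) \geq 1$. Here I will set $A' = A - \{r\} \in \mcal{Z}_{\leq k-r}$, denote the associated $0$-rooted multiset $(A',\emptyset,\ldots,\emptyset)$ by $f'$, and apply Corollary \ref{crl:powers} to obtain $d(f) = (r+1)\, d(f')$. Corollary \ref{crl:0bases} then provides $d(f') \leq d([k-r]_b)$.

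The whole theorem thus reduces to proving $(r+1)\, d([k-r]_b) < d([k]_b)$. Because $r \geq 1$ gives $r+1 \leq 2^r$, it suffices to establish $2^r d([k-r]_b) < d([k]_b)$. This I intend to get by iterating the strict doubling estimate
\[2\, d([n]_b) < d([n+1]_b)\]
supplied by Corollary \ref{crl:headstrongbounds} exactly $r$ times; a single application already makes the inequality strict, so the required strict bound $d(f) < d([k]_b)$ for $f \neq [k]_b$ is delivered without extra work.

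I expect the only real content to lie in observing that Corollary \ref{crl:headstrongbounds} has been calibrated precisely so that the factor $2$ it produces absorbs the arithmetic multiplier $(r+1)$ coming from Lemma \ref{L15}; past that observation, the argument is a short chain of already-proved estimates, with no new combinatorial input required.
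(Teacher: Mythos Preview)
Your proposal is correct and follows essentially the same route as the paper: split on $r=\min(A)$, invoke Corollary~\ref{crl:0bases} for $r=0$, and for $r\geq 1$ combine Corollary~\ref{crl:powers} with the iterated doubling estimate of Corollary~\ref{crl:headstrongbounds} together with $r+1\leq 2^r$. Your write-up is in fact slightly cleaner than the paper's, since you explicitly bound $d(f')\leq d([k-r]_b)$ via Corollary~\ref{crl:0bases} rather than via the (ill-suited) Corollary~\ref{crl:parts}, which only applies to full intervals.
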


\begin{proof}
	Let $r \eqdef \min(A)$. If $r=0$, then the claim reduces to Corollary \ref{crl:0bases}. Otherwise, 
	we have by Corollary \ref{crl:powers}
	\[d(f) = (r+1)\sum_{B \mbox{ \tiny divisor of }A-\{r\}} b^{\crd{B}}.\] 
	Let $n\eqdef \max(A)-r$, so that $A - \{r\}$ is a $0$-rooted set in $\mcal{Z}_{n}$. By Corollary \ref{crl:parts} we have
	\[d(f) = (r+1)\sum_{m=1}^n H(n, m) b^{m}\]
	and on the other hand,
	\[d([k]_b) = \sum_{m=1}^{k}H(n+r, m)b^{m}\geq \sum_{m=1}^{n+r}H(n+r, m)b^{m}.\]
	By Corollary \ref{crl:headstrongbounds} we have
	\[\sum_{m=1}^{n+1}H(n+1, m)b^{m}>2\sum_{m=1}^{n}H(n, m)b^{m}\]
	and by induction, for any $r\geq 1$
	\[\sum_{m=1}^{n+r}H(n+r, m)b^{m}>2^r\sum_{m=1}^{n}H(n, m)b^{m}\geq(r+1)\sum_{m=1}^{n}H(n, m)b^{m}\]
	so that $d([k]_b)>d(f)$.
\end{proof}

We can now prove Conjecture \ref{conjbases}.

\begin{thm}
	Let $b \geq 2$, let $f \in \mcal{M}^b_{\leq k}$ be a multiset that is not the constant $0$ function. Then $d(f) \leq d([k]_b)$, and the inequality is strict if $f \neq [k]_b$. Equivalently, in any base $b \geq 3$, among all $k$-digit numbers $n$, $d_b(n)$ has a unique maximum at $n = (b^k-1)/(b-1) = 111\ldots 1|_b$.
\end{thm}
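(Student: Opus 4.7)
The plan is to combine Lemma \ref{manytoone}, which discards multiplicities by replacing a multiset with its ``underlying set,'' with Theorem \ref{thm:bases}, which already handles the set case; the equivalent numerical statement then follows from the monoid isomorphism $\beta$ established in this section. This is essentially a concatenation of previously established results, so there is no substantial technical obstacle---the subtlety is keeping careful track of when each intermediate inequality is strict, and checking that the reindexing between the multiset and numerical formulations is correct.

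Let $f \in \mcal{M}^b_{\leq k}$ be nonempty, with set-array representation $(A_1, A_2, \ldots, A_b)$, and let $f^* \in \mcal{M}^b_{\leq k}$ denote the multiset with set-array representation $(A_1, \emptyset, \ldots, \emptyset)$---that is, $f$ with all multiplicities flattened to $1$. Applying Lemma \ref{manytoone} to $f$ gives $d(f) \leq d(f^*)$, with strict inequality exactly when $A_2 \neq \emptyset$ (i.e.\ when $f$ is a proper multiset). Since $f^*$ corresponds to a plain set, Theorem \ref{thm:bases} applies to it and yields $d(f^*) \leq d([k]_b)$, with strict inequality unless $f^* = [k]_b$. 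Chaining these inequalities gives $d(f) \leq d([k]_b)$.

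For the uniqueness clause, suppose $f \neq [k]_b$. Either $f$ is a proper multiset (so $A_2 \neq \emptyset$ and the first inequality is strict) or $f = f^*$ is a set distinct from $[k]_b$ (so the second inequality is strict); in either case, $d(f) < d([k]_b)$. This case analysis covers all ways $f$ can fail to equal $[k]_b$.

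The numerical statement in the theorem uses its own parameters: base $b \geq 3$ and $k$-digit numbers. Under the monoid isomorphism $\beta$ of this section, which identifies $(\mcal{M}^{b-1}, +)$ with base-$b$ numbers under $\lm$, these correspond to the multiset parameters $b-1$ and $k-1$; the multiset $[k-1]_{b-1}$ maps to $111\ldots 1|_b = (b^k-1)/(b-1)$ (with exactly $k$ ones), while nonempty elements of $\mcal{M}^{b-1}_{\leq k-1}$ whose maximal element is $k-1$ correspond bijectively to base-$b$ numerals of exactly $k$ digits. Specializing the multiset inequality proved above to these $f$ then yields the conjectured numerical maximality statement.
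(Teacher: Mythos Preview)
Your proof is correct and follows exactly the same approach as the paper: apply Lemma \ref{manytoone} to reduce to the set case, then invoke Theorem \ref{thm:bases}, and track strictness through the two cases. Your final paragraph making the reindexing between multiset and numerical parameters explicit is a welcome addition that the paper leaves implicit.
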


\begin{proof}
	Let $(A_1, A_2, \ldots, A_b)$ be the set-array representation of $f$. Let $f^* \in \mcal{M}^b$ be a multiset given by the set-array representation $\vec{A}^* = (A_1, \emptyset, \emptyset, \ldots, \emptyset)$. Lemma \ref{manytoone} gives $d(f^*) \geq d(f)$ and the inequality is strict if $A_2 \neq \emptyset$. Theorem \ref{thm:bases} then gives $d(f^*)\leq d([k]_b)$ and the inequality is strict if $f^*\neq [k]_b$. Thus, $d(f) \leq d([k]_b)$ and the inequality is strict if $f \neq [k]_b$.
\end{proof}
\section{Further questions}
\label{s:conclusion}
We have seen that sumset divisors of finite subsets of $\nats$ correspond to binary lunar divisors. The setting of lunar arithmetic naturally inspires number-theoretic questions. This paper investigated divisibility questions for sumsets. In \cite{lunar} Appelgate, LeBrun, and Sloane investigate a whole panoply of number-theoretic constructions for lunar numbers. Do other constructions have natural sumsets-counterparts, and if so may lunar arithmetic shed new insights on sumsets? We single out two important examples. One, sumsets of the form $A+A$ correspond to base-$2$ lunar squares, discussed briefly in \S4 of \cite{lunar}.

Two, irreducible finite subsets correspond to base-$2$ lunar primes, investigated in \S3 of \cite{lunar}. We have mentioned in \S\ref{s:preface} Wirsig's proof \cite{wirsing} that almost all subsets of $\nats$ are asymptotically irreducible. If we restrict our attention to finite subsets only, Applegate, LeBrun, and Sloane make a more precise conjecture:

\setcounter{conj}{9}
\begin{conj}[LeBrun et al.]
	Let $\pi_b(k)$ denote the number of base $b$ lunar primes with $k$ digits. Then,
	\[\pi_b(k) \sim (b-1)^2b^{k-2}.\]
\end{conj}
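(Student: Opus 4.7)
The plan is to estimate the number of $k$-digit base-$b$ lunar composites $\gamma_b(k) \eqdef (b-1)b^{k-1} - \pi_b(k)$ and show $\gamma_b(k) \sim (b-1)b^{k-2}$, which then yields $\pi_b(k) \sim (b-1)^2 b^{k-2}$. Since $(b-1)b^{k-2}$ is exactly the number of $k$-digit base-$b$ numbers ending in the digit $0$, the conjecture amounts to asserting that asymptotically almost every composite ends in $0$. The lower bound $\gamma_b(k) \geq (b-1)b^{k-2}$ follows from Lemma \ref{L15}: writing $n = m b^r$ with $b \nmid m$ and $r \geq 1$, one has $d_b(n) = (r+1)d_b(m)$, which exceeds $2$ for every $k$-digit number ending in $0$ with $k \geq 3$ (consider the subcases $r \geq 2$ and $r = 1$ with $m$ multi-digit separately).

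For the matching upper bound, partition the composites not ending in $0$ into two classes. First, those having a single-digit non-identity divisor $c \in \{1, \ldots, b-2\}$: a factorization $n = c \otimes C$ forces every digit of $n$ to be at most $c \leq b-2$, so the count of such $k$-digit $n$ not ending in $0$ is at most $(b-2)^2(b-1)^{k-2} = o(b^{k-2})$, because $(b-1)/b < 1$ (this class is empty for $b=2$). Second, those with maximal digit $b-1$ admitting only multi-digit factorizations $n = A \otimes B$ with both factors of length $\geq 2$ -- this is the main obstacle.

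For the second class a naive union bound summing over factor pairs $(A, B)$ of lengths $m$ and $k-m+1$ (neither ending in $0$) gives $O(k\, b^{k-3})$, which exceeds $b^{k-2}$ by a factor of $k$ and is therefore not sharp. The strategy is to exploit the heavy overcounting via the correspondence between lunar multiplication and (multi-)sumsets from \S\ref{s:lunar} and \S\ref{s:bases}: composites in this class are in bijection with non-trivial multi-sumsets $B+C$ inside $\{0, \ldots, k-1\}$ with $\crd{B}, \crd{C} \geq 2$. A finite-cardinality analogue of Wirsing's theorem \cite{wirsing} -- asserting that among the $2^{k-2}$ subsets of $\{0, \ldots, k-1\}$ containing both $0$ and $k-1$, only $o(2^{k-2})$ are reducible -- would handle $b=2$; an analogous statement at the height-$(b-1)$ multiset level, lifted via Lemma \ref{manytoone} and Theorem \ref{propdivbase}, should handle $b \geq 3$.

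The main obstacle is producing this effective Wirsing-type estimate, strong enough to beat the $\Theta(k)$ overcount in the naive bound. Wirsing's original argument \cite{wirsing} is genuinely asymptotic in nature and supplies no explicit rate. The proposed route is to combine a Pl\"unnecke--Ruzsa-type bound, which controls the number of distinct sumsets $B+C$ with prescribed factor cardinalities, with a direct probabilistic estimate showing that a typical random subset of $\{0, \ldots, k-1\}$ lacks the long arithmetic-progression-like regularities a non-trivial sumset must possess. Making this combination quantitative enough to yield $o(b^{k-2})$ uniformly across the factor-length partition $m + (k-m+1) = k+1$ is, I expect, the crux of the proof.
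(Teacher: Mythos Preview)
The paper does not prove this statement. It appears only in \S\ref{s:conclusion} (``Further questions'') as an open conjecture of Applegate, LeBrun, and Sloane, quoted to motivate further work; there is no proof in the paper to compare your proposal against.

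As for the proposal itself, it is a strategy outline rather than a proof, and you say so yourself. The preliminary reductions are sound: for $k\geq 3$ every $k$-digit number ending in $0$ is indeed composite by Lemma~\ref{L15}, and the count of composites with maximal digit at most $b-2$ is genuinely $o(b^{k-2})$. The remaining class---numbers not ending in $0$ with maximal digit $b-1$ that admit a nontrivial multi-digit factorization---is, via the correspondence of \S\ref{s:lunar} and \S\ref{s:bases}, exactly the class of nontrivially reducible $0$-rooted (multi)sets in $\mcal{Z}_{k-1}$. Showing that these number $o(b^{k-2})$ is not a lemma on the way to the conjecture; it \emph{is} the conjecture, restated in sumset language. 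Wirsing's theorem is a measure-theoretic statement about infinite subsets of $\nats$ and gives no rate; no quantitative finite analogue of the strength you need is known, and the Pl\"unnecke--Ruzsa inequalities control sumset \emph{sizes}, not the number of distinct sets realizable as sumsets, so it is unclear how they would enter. Your outline correctly locates the obstruction but does not move past it.
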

In particular, this predicts that about half of all subsets of $[k]$ are irreducible.

Theorem \ref{crlodd} undergirds many of the results of this paper; in that the proofs of Theorem \ref{crleven} and Theorem \ref{thm:bases} proceed via reductions to the $0$-rooted case. The load-bearing part of the proof of Theorem \ref{crlodd} is the promotion procedure described in Section \ref{s:0-divisors}. However, this procedure is somewhat unique for the interval $[k]$. For example, 
\[\{0,2\}+\{0,4\} = \{0, 2, 4, 6\}\]
and the attempt to promote these to factors of $\{0, 2, 3, 4, 5, 6\}$ is unsuccessful
\[\{0, 2\} + \{0, 1, 3, 4\} = [6]\]
even though $\{0, 2, 4, 6\} \subseteq \{0, 2, 3, 4, 5, 6\}$. This is the difficulty in proving Conjecture \ref{conjodd2} (Part II) regarding the runner-up to $d([k])$. Is there a way to generalize the promotion procedure to other sets?

Another way of attacking \ref{conjodd2} (Part II) is via direct counting. Applegate, LeBrun, and Sloane construct (Theorem 18 in \cite{lunar}) a generating function by considering a subtle relation with restricted compositions, it is then used to show that $d_2(2^k-3)/d_2(2^k-1) \to 1/5$. Section \ref{s:counting} describes a bijection between divisors of $[k]$ and headstrong compositions, which is expanded upon in Section \ref{s:headstrongtriangle}. Is there a similar bijection between divisors of arbitrary sets and different kind of compositions?

\section{Acknowledgements}
Many thanks to Professor Almut Burchard for her expert guidance and unflagging positivity. I would also like to thank Professor Leo Goldmakher for teaching me about sumsets at the University of Toronto. The question about the number of divisors a sumset may have was first raised at a brainstorming session with Professor Goldmakher and his SMALL team of students Huy Pham, Sophia Dever and Vidya Venkatesh during my visit to Williams College in 2015.

I am also grateful to Brady Haran for his delightful Numberphile videos. It was his interview with Neil Sloane \cite{brady} which prompted this paper.

\newpage


\begin{thebibliography}{99}
	\bibitem{lunar} Appelgate D, LeBrun M, Sloane NJA. (2011). ``Dismal Arithmetic.'' \textit{Journal of Integer Sequences,} 14, 11.9.8.

	\bibitem{charl} Charalambides CHA. ``On the enumeration of certain compositions and related sequences of numbers.'' \textit{Fibonacci Quarterly,} 20: 132-146.

	\bibitem{elsh} Elsholtz C. (2006) ``Additive decomposability of multiplicatively defined sets.'' \textit{Functiones et Approximatio} 35: 61-77.

	\bibitem{erdosszem} Erd\H{o}s P. Szemer\'edi E. ``Sums and products of integers.'' \textit{Studies in Pure Mathematics.} Birkh\"uaser, 1983.

	\bibitem{bijection} Frosini A, Rinaldi S. (2006). ``On the sequence A079500 and its combinatorial interpretations.'' \textit{Journal of Integer Sequences,} 9, 06.3.1.

	\bibitem{concrete} Graham RL, Knuth DE, Patashnik O. \textit{Concrete Mathematics.} Addison-Wesley Professional, 1994.

	\bibitem{brady} Haran, Brady. ``Primes on the Moon (Lunar Arithmetic)''. Video interview with Neil Sloane published on \textit{Numberphile.com}. \url{www.numberphile.com/videos/lunar-arithmetic}. Retrieved February 2019.

	\bibitem{heubach} Heubach S, Mansour T. \textit{Combinatorics of Compositions and Words.} CRC Press, 2010.

	\bibitem{jordan} Jordan C. \textit{Calculus of Finite Differences.} Chelsea, 1965.

	\bibitem{knopf} Knopfmacher A, Robbins N. (2005). ``Compositions with parts contrained by the leading summand.'' \textit{Ars Combinatoria,} 76: 287-295.

	\bibitem{Mac} MacMahon PA. \textit{Combinatory Analysis I.} Cambridge University Press, 1915.

	\bibitem{Nathanson1} Nathanson Melvyn B. \textit{Additive Number Theory: The Classical Bases.} Springer, 1996.

	\bibitem{Nathanson2} Nathanson Melvyn B. \textit{Additive Number Theory: Inverse Problems and the Geometry of Sumsets.} Springer, 1996.

	\bibitem{Ostmann} Ostmann HH. \textit{Additive Zahlentheorie.} 2 Volumes. Springer, 1956.

	\bibitem{sloane} Sloane, NJA. The online encyclopedia of integer sequences. \url{oeis.org}

	\bibitem{gencomp} Srinivasa Rao K, Agarwal AK. (2000). ``On a generalized composition function.'' \textit{Journal of the Indian Mathematical Society,} 67: 99-106.

	\bibitem{TaoVu} Tao T, Vu Van H. \textit{Additive Combinatorics.} Cambridge University Press, 2010.

	\bibitem{wirsing} Wirsing E. (1953) ``Ein metrischer Satz über Mengen ganzer Zahlen.'' \textit{Archiv der Mathematik,} 7: 392-398.

	\bibitem{mathworld} Wolfram, DA. (1998) ``Solving generalized Fibonacci recurrences.'' \textit{The Fibonacci Quaterly} 36: 129-145.
\end{thebibliography}
\end{document}